\DeclareMathOperator{\interior}{Int}
\newtheorem{theorem}{Theorem}[section]
\newtheorem{definition}[theorem]{Definition}
\newtheorem{lemma}[theorem]{Lemma}
\newtheorem{proposition}[theorem]{Proposition}
\newtheorem{remark}[theorem]{Remark}
\journal{ }
\begin{document}

	\renewcommand{\listtablename}{\'indice de tablas}

	\nocite{*}
	
	\begin{frontmatter}
		
		
		
\title{Extent of occurrence reconstruction using a new data-driven support estimator}
\author[rvt]{A. Rodr\'iguez-Casal}
\author[rvt]{P. Saavedra-Nieves \corref{cor1}}
\address[rvt]{Department of Statistics, Mathematical Analysis and Optimization, Universidade de Santiago de Compostela, Spain}
\cortext[cor1]{Corresponding author: paula.saavedra@usc.es (P. Saavedra-Nieves)}

\begin{abstract}
Given a random sample of points from some unknown distribution, we propose a new data-driven method for estimating its probability support $S$. Under the mild assumption that $S$ is $r-$convex, the smallest
$r-$convex set which contains the sample points is the natural estimator. The main problem for using this estimator in practice
is that $r$ is an unknown geometric characteristic of the set $S$. A stochastic algorithm is proposed for determining an optimal estimate of $r$ from the data under mild regularity assumptions on the density function. The resulting data-driven reconstruction of $S$ attains the same convergence rates as the convex hull for estimating convex sets, but under a much more flexible smoothness shape condition. The new support estimator will be used for reconstructing the extent of occurrence of an assemblage of invasive plant species in the Azores archipelago.\end{abstract}

%
%

\begin{keyword}Support estimation, $r-$convex, testing $r-$convexity, spacing, extent of occurrence (EOO), area of occupancy (AOO)
	\end{keyword}

\end{frontmatter}

\section{Introduction}
\label{intro}

Natural reserve network designs require information about species occurrence data. One of the most widely handled concepts is the extent of occurrence (EOO). In fact, the International Union for the Conservation of Nature (IUCN) establishes the EOO as a key measure of extinction risk. Roughly speaking, the IUCN defines the EOO as the area contained within the shortest continuous imaginary boundary which can be drawn to encompass all the known, inferred or projected sites of present occurrence of a taxon, excluding cases of vagrancy. For a complete review on this subject, see Rondinini et al. (2006). 

The problem of EOO reconstruction will be illustrated via the analysis of a real dataset containing $740$ geographical coordinates (or occurrences) for $28$ species of terrestrial invasive plants distributed in two of the Azorean islands (Terceira and S\~ao Miguel) from 2010 until 2018. In Figure \ref{fig1}, a satellite image of major Azorean islands (top, left) and five of the invasive species are shown (bottom). The $740$ geographical locations (slightly jittered) are represented on the map of Terceira and S\~ao Miguel islands in Figure \ref{fig1} (top, right). This dataset is available from the Global Biodiversity Information Facility (GBIF) website (see GBIF.org, 27th May 2019).

An initial estimation of the EOO for this assemblage of invasive plants was obtained from GeoCAT. It is an open source, browser based tool endorsed by IUCN that allows to reconstruct the EOO from the geographical locations of species or taxon. Users can quickly combine data from multiple sources including GBIF datasets which can be easily imported. The GeoCAT reconstruction of the EOO for the assemblage of plant species used here as an example is given by the convex hull of the sample of the $740$ coordinates, $H(\mathcal{X}_{740})$. Mathematically, $H(\mathcal{X}_{740})$ is the smallest convex set that contains $\mathcal{X}_{740}$. In fact, it is computed as the intersection of all half spaces containing $\mathcal{X}_{740}$. For more details, compare Figure \ref{fig2} (first row, left) and Figure \ref{fig2} (second row, left). Note that this EOO estimation presents some limitations because a marine area is inside the $H(\mathcal{X}_{740})$. Obviously, none of the plant species considered here can occur in open sea which should remain outside the EOO. Therefore, convexity can be a too restrictive shape condition to be assumed in practice.

Our goal is to propose a more realistic and automatic EOO reconstruction from support estimation perspective. This methodological approach has proved to be useful in different disciplines such as image analysis (see Rodr\'iguez-Casal and Saavedra-Nieves, 2016), quality control (see Devroye and Wise, 1980 or Chevalier, 1976) or animals home range estimation (see De Haan and Resnick, 1994 or Ba\'illo and Chac\'on, 2018). However, the problem of reconstructing the EOO has not been yet considered formally under this viewpoint.

In general, support estimation deals with the problem of reconstructing the
compact and nonempty support $S\subset \mathbb{R}^d$ of an absolutely continuous random vector $X$ from a
random sample $\mathcal{X}_{n}=\{X_{1},...,X_{n}\}$ (see Cuevas and Fraiman, 2010 for a complete survey on the subject). Of course, when the support $S$ is assumed to be convex then the convex hull of the sample points, $H(\mathcal{X}_n)$, provides a natural support estimator. See Schneider (1988, 1993), D\"{u}mbgen and Walther (1996) or Reitzner (2003), for thorough analysis of this estimator. This estimator is indeed simple, but it may not be suitable for practical situations, failing to provide a satisfactory support estimator when $S$ is disconnected as in the example of invasive plants in Azores archipelago where the occurrences are distributed in two different islands.

 \begin{figure}
 	\hspace{-1.5cm}\includegraphics[scale=1]{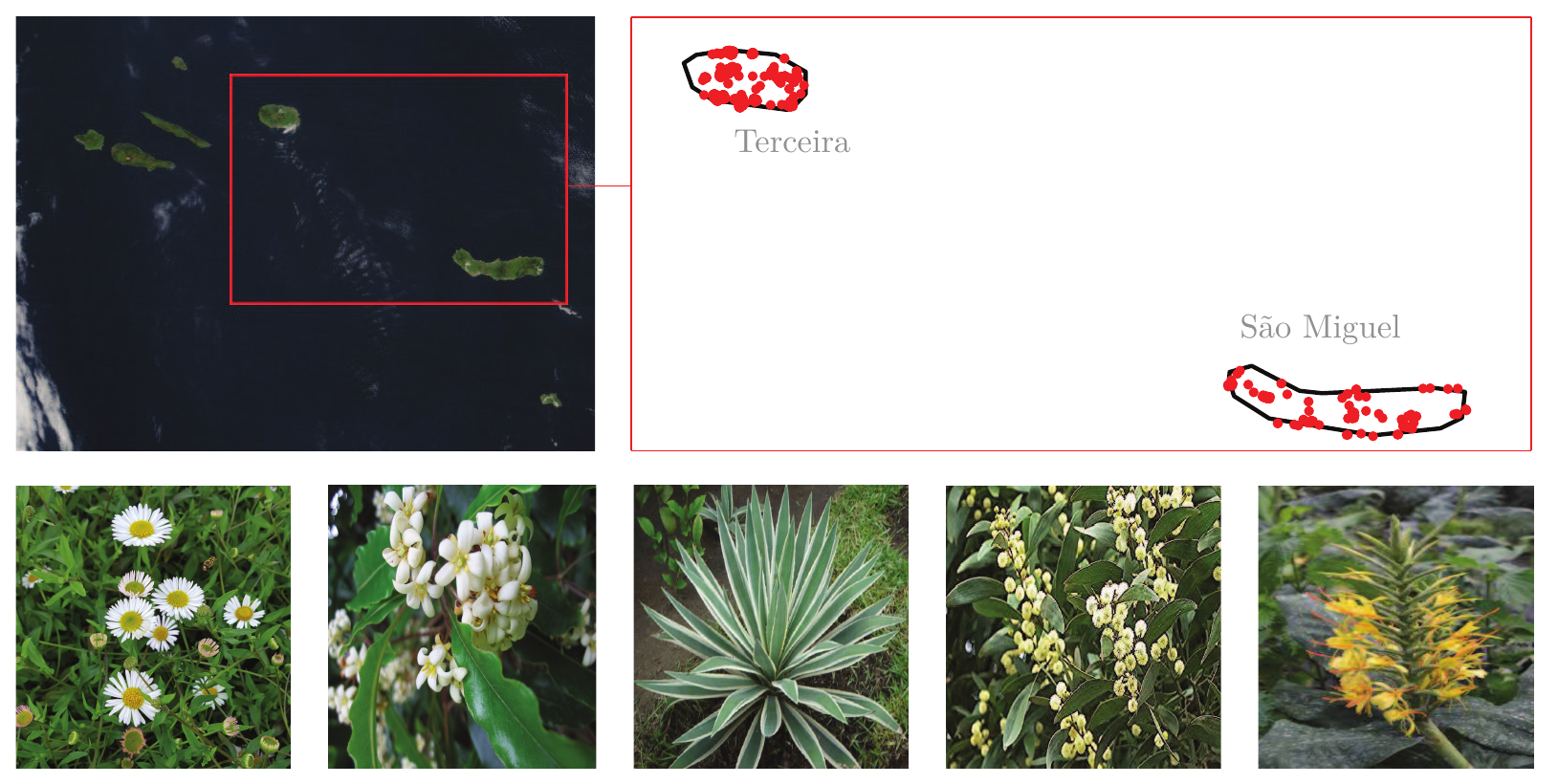}
 	\caption{Location of Terceira and S\~ao Miguel islands in the Azores Archipelago, NASA satellite image (top, left). The enlarged area (top, right) shows the 740 geographical locations used to reconstruct the EOO of an assemblage of 28 invasive plant species including: Erigeron karvinskianus, Pittosporum undulatum, Agave americana, Acacia melanoxylon, Hedychium gardnerianum (bottom, from left to right).}\label{fig1}
 \end{figure}

In this work, we will propose a new data-driven support estimator and, as a consequence, an original and realistic EOO reconstruction that will overcome the limitations derived from convexity restriction. Concretely, we assume that the support $S$ satifies the $r-$convexity shape condition for $r>0$, a much more flexible property than convexity as it will be shown. Our proposal considers the smallest $r$-convex set containing $\mathcal{X}_{n}$ ($r-$convex hull of $\mathcal{X}_{n}$, namely $C_r(\mathcal{X}_{n})$) as the natural estimator for the usually unknown support. This estimator is well known in the computational geometry literature for providing
reasonable global reconstructions if the sample points are (approximately) uniformly distributed on the
set $S$ (see Edelsbrunner, 2014). In fact, despite being
 $r-$convexity a more general condition than convexity, $C_r(\mathcal{X}_n)$ can achieve the same convergence rates than $H(\mathcal{X}_n)$ as proved by Rodr\'{\i}guez-Casal (2007). However, this
 estimator presents an important disadvantage: it depends on the commonly unknown parameter $r$. Although the influence of $r$ is considerable, it must be specified by the practitioner (see Joppa et al., 2016). For the example of invasive species in Azorean islands,  Figure \ref{fig2} shows $C_r(\mathcal{X}_{740})$ for different values of $r$. Small values of $r$ provide fragmented estimators (many isotated points and connected components) leading to an EOO reconstruction which resembles $\mathcal{X}_n$ (Figure \ref{fig2}: second row, right). If $r=0.3$, a realistic reconstruction of the EOO is obtained since sea areas are not inside the estimator (Figure \ref{fig2}: third row, left). However,
 if large values of $r$ are considered then $C_r(\mathcal{X}_n)$ basically coincides with $H(\mathcal{X}_n)$ (Figure \ref{fig2}: third row, right). Therefore, arbitrary choices of $r$ may provide incongruous EOO estimations.
 
Most of the available results in the literature about support estimation make special emphasis on asymptotic properties, especially
consistency and convergence rates but they do not usually give any criterion for selecting the unknown parameter $r$ in $C_r(\mathcal{X}_n)$ from the sample. The aim of this paper is to overcome this drawback and present a method for selecting the parameter $r$ for the $r-$convex hull estimator from the available data. This problem has scarcely been studied in the statistical literature with just a couple of references available on the topic. First, Mandal and Murthy (1997) proposed a selector for $r$ based on
the concept of minimum spanning tree but only consistency of the method was provided without considering optimality issues. Later, Rodr\'iguez-Casal and Saavedra-Nieves (2016) proposed an automatic selection criterion based on a very intuitive idea for the selection of $r$ but under the restriction that the sample distribution is uniform. According to Figure \ref{fig2} (bottom, right), sea areas are contained
in $C_r(\mathcal{X}_n)$ if the selected $r$ is too large. So, the estimator contains a large ball empty of sample points, see gray balls in Figure \ref{fig2} (top, left) and (bottom, right). Janson (1987) calibrated the size of this maximal ball (or spacing)
when the sample distribution is uniform on $S$. Berrendero et al. (2012) used this result to test uniformity when the support is unknown. However, Rodr\'iguez-Casal and Saavedra-Nieves (2016) followed the somewhat opposite approach. They assume that $\mathcal{X}_n$ comes from a uniform distribution on $S$ and if a big
enough spacing is found in $C_{r}(\mathcal{X}_n)$ then it is incompatible with the assumption that data are uniform. As a consequence, it is concluded that $r$ is too large. Therefore, it is proposed to select the largest value of $r$ compatible with the uniformity assumption on $C_{r}(\mathcal{X}_n)$. 

Recently, Aaron et al. (2017) extended the results by Janson (1987) to the case where the data are generated from
a density $f$ that is bounded from below and Lipschitz continuous restricted to its bounded support. Here, we will use this extension in order to derive a test to decide, given a fixed $r>0$, whether the unknown support $S$ is $r-$convex with no more information apart from $\mathcal{X}_n$. In this case, if a large
enough spacing is found in $C_{r}(\mathcal{X}_n)$ then the null 
hypothesis of $r-$convexity will be rejected. A new data-driven selector for the index $r$ will be established from this test. Following the scheme in  Rodr\'iguez-Casal and Saavedra-Nieves (2016), it is proposed to choose the largest value of $r$ compatible with the $r-$convexity assumption.

Once the parameter $r$ is estimated from $\mathcal{X}_n$, a new data-driven support reconstruction, based on the estimator of $r$, will be proposed. As a consequence, a flexible reconstruction for the EOO will be obtained.

This paper is organized as follows. Mathematical tools are introduced in Section \ref{nosometedo}. First, the geometric assumptions on $S$ and the optimal value of the parameter $r$ to be estimated are introduced. Then, the regularity assumptions on $f$ and a new nonparametric estimator are established. At last, the maximal spacing and its estimator are formally defined. In Section \ref{testrconvex}, we propose a procedure for testing the null 
hypothesis that $S$ is $r-$convex for a given $r>0$. This test will play a key role in the definition of the consistent estimator of $r$. Then, a new estimator for the support $S$ is proposed in Section \ref{cons} and it will be seen that it achieves the same convergence rates as the convex hull for estimating convex sets. The main
numerical features involving the practical application of the algorithm are exposed in Section \ref{numerical}. In Section \ref{reald}, the performance of the new support reconstruction will be analyzed estimating the EOO of an assemblage of terrestrial plant species in two Azorean islands. Conclusions are exposed in Section \ref{con}. In Section \ref{proof}, we detail the proofs of theoretical results. Finally, some auxiliary results are deferred to Section \ref{aux}.


\section{Mathemathical tools}\label{nosometedo}

Regularity conditions, namely shape assumptions on $S$, will be introduced next. In addition, we will discuss which is the optimal value of the shape index $r$ to be estimated. Then, required conditions on the density function $f$ and an original nonparemetric kernel estimator will be also presented. Finally, basic notions on maximal spacings are established.

\subsection{About geometric assumptions on $S$ and the optimal value of $r$}
 In this work, $S$ is assumed to be $r-$convex for some $r>0$. Therefore, it is necessary to establish the formal definition of this geometric property in Definition \ref{rconvexi}.

\begin{definition}\label{rconvexi}
 A closed set $A\subset\mathbb{R}^d$ is said to be $r-$convex, for some $r>0$, if $A=C_{r}(A)$, where
$$C_{r}(A)=\bigcap_{\{B_r(x):B_r(x)\cap
	A=\emptyset\}}\left(B_r(x)\right)^c$$
denotes the $r-$convex hull of $A$ and $B_r(x)$, the open ball with
center $x$ and radius $r$. 
\end{definition} 

In practice, $C_{r}(\mathcal{X}_n)$ can be computed as the intersection of the complements of all open balls of radius larger than or equal to $r$ that do not intersect $\mathcal{X}_n$. In Figure \ref{fig111}, the computation of $C_{r}(\mathcal{X}_{740})$ for $r=0.3$ (left) and $r=5$ (right) is shown considering the example in Azorean islands. Note that $C_{0.3}(\mathcal{X}_{740})$ is an acceptable EOO reconstruction equal to the intersection of the complements of all gray open balls represented. However, if we select $r=5$, marine areas are clearly inside the $C_{5}(\mathcal{X}_{740})$.

Furthermore, the concept of $r-$convex hull is closely related to the closing of $A$ by $B_r(0)$ from the mathematical morphology, see Serra (1982). It can be shown that
$$
C_{r}(A)=(A\oplus r B)\ominus r B,
$$
where $B=B_1(0)$, $\lambda C=\{\lambda c: c\in C\}$, $C\oplus D=\{c+d:\ c\in C, d\in D\}$ and $C\ominus D=\{x\in\mathbb{R}^d:\ \{x\}\oplus D\subset C\}$, for $\lambda \in \mathbb{R}$ and
sets $C$ and $D$.

 \newpage

\begin{figure}   
\hspace{-1.3cm}	\includegraphics[scale=1]{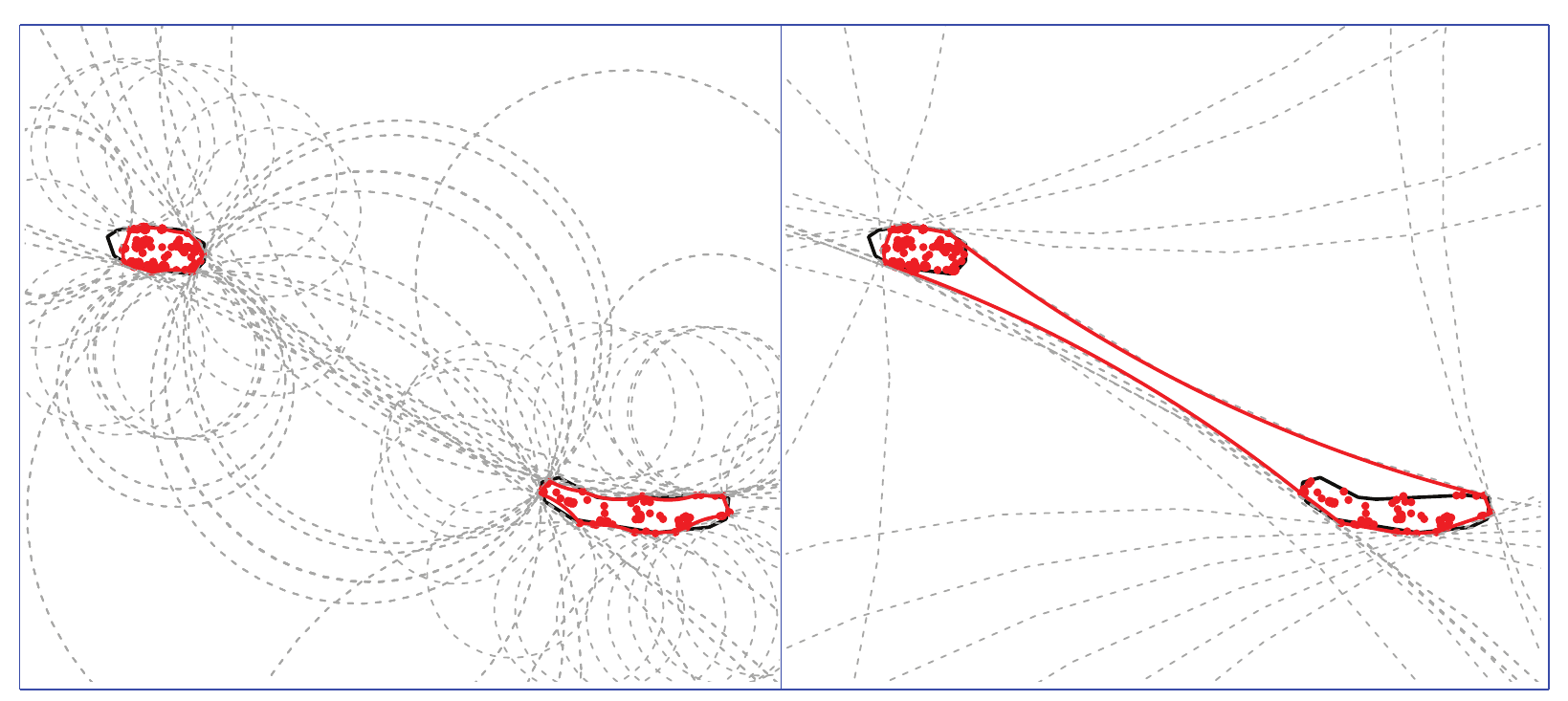}
	\vspace{.5cm}\caption{$C_{r}(\mathcal{X}_{740})$ (red color) and $B_{r^*}(x)$ for $r^*\geq r$ (gray color) such that $B_{r^*}(x)\cap
		\mathcal{X}_{740}=\emptyset$ taking $r=0.3$ (left) and $r=5$ (right).}\label{fig111}
\end{figure}

As it has been mentioned in the Introduction, the problem of reconstructing a $r-$convex support $S$ using a data-driven procedure could be easily solved if the parameter $r$ is estimated from a
random sample of points $\mathcal{X}_n$ taken in $S$. The first step is to determine precisely
the optimal value of $r$ to be estimated, which is established in Definition \ref{sup}: we propose to estimate the largest value of $r$ which verifies that $S$ is $r-$convex.

\begin{definition}\label{sup}Let $S\subset \mathbb{R}^{d}$ a compact, nonconvex and $r-$convex set for some $r>0$. It is defined
\begin{equation}\label{maximo2}
r_0=\sup\{\gamma>0:C_\gamma(S)=S\}.\end{equation}
\end{definition}

For simplicity, it is assumed that $S$ is not convex (of course, if $S$ is convex $r_0$ would be infinity). Proposition 2.4 in Rodr\'iguez-Casal and Saavedra-Nieves (2016) shows that, under mild regularity conditions, the supreme established in (\ref{maximo2}) is a maximum, that is, $S$ is $r_0-$convex and $r-$convex for all $r< r_0$. Under this hypothesis, the optimality of the smoothing parameter defined in (\ref{maximo2}) can be justified. It is clear that $S$ is $r-$convex for $r\leq r_0$ but if  $r<r_0$, $C_{r}(\mathcal{X}_n)$ is a non admisible estimator since it is always outperformed by $C_{r_0}(\mathcal{X}_n)$. This happens because, with probability one,
$C_{r}(\mathcal{X}_n)\subset C_{r_0}(\mathcal{X}_n)\subset S$. It should also noted that, for $r>r_0$, even for $r$ very close to $r_0$, $C_{r}(\mathcal{X}_n)$ would considerably overestimate $S$. For instance, if $S$ is equal to the circular ring in Figure \ref{Figura1} (right) and $r>r_0$, $C_{r}(S)$ coincides with the outer circle. The mild regularity condition we need is slightly stronger than $r-$convexity: \vspace{4mm}\\
($R\label{new}$) $S$ fulfills the $r-$rolling property and $S^c$ fulfills the $\lambda-$rolling condition for some $r$, $\lambda>0$.\vspace{3mm}\\
Following Cuevas et al. (2012), it is said $A$ satisfies the (outside) $r-$rolling condition if each boundary point $a\in\partial A$ is contained
in a closed ball with radius $r$ whose interior does not meet $A$. There exist interesting relationships between this
property and $r-$convexity. In particular, Cuevas et al. (2012) proved that if $A$ is compact and $r-$convex then $A$ fulfills the $r-$rolling condition. According to Figure \ref{Figura1} (left), the reciprocal is not always true. 
Proposition 2.2 in Rodr\'iguez-Casal and Saavedra-Nieves (2016) shows that ($R$) is a (mild) sufficient condition to ensure the $r-$rolling condition implies
$r-$convexity. 
Condition ($R$) was essentially analyzed by Walther (1997, 1999) but just
the case $r=\lambda$ was taken into account. In this work, the radius $\lambda$ can be different from $r$, see
Figure \ref{Figura1} (center). Walther (1997, 1999) proved that, if $r=\lambda$, $\partial S$ is a $\mathcal{C}^1$ $(d-1)-$dimensional submanifold in $\mathbb{R}^d$ and that $S$ is $r-$convex. Proposition 2.2 in Rodr\'iguez-Casal and Saavedra-Nieves (2016) generalized this property since, for $\lambda<r$, Walther's result would only imply $\lambda$-convexity but not $r-$convexity. So, for sets
satisfying  ($R$),  $r-$convexity is ensured,
even for very small values of $\lambda$.



Proposition 2.2 in Rodr\'iguez-Casal and Saavedra-Nieves (2016) is the key for proving that $r_0$ is a maximum. To see this, let be $\{r_n\}$ a sequence converging
to $r_0$ such that $C_{r_n}(S)=S$. This sequence always exists by Definition \ref{sup}. It can be proved, using the results by Cuevas et al. (2012),
that $S$ satisfies the $r_n-$rolling condition and, by Proposition 2.3 in Rodr\'iguez-Casal and Saavedra-Nieves (2016), this property is preserved in the limit, so $S$ is also $r_0$-rolling. Finally, under ($R$), $r_0$-rolling implies that $S$ is $r_0-$convex.


The authors conjecture that the equivalence between $r-$convexity and $r-$rolling could be stated in a more general framework and it may be proved under milder conditions.

\begin{figure}[h!]
	\hspace{-1.2cm} \includegraphics[scale=1]{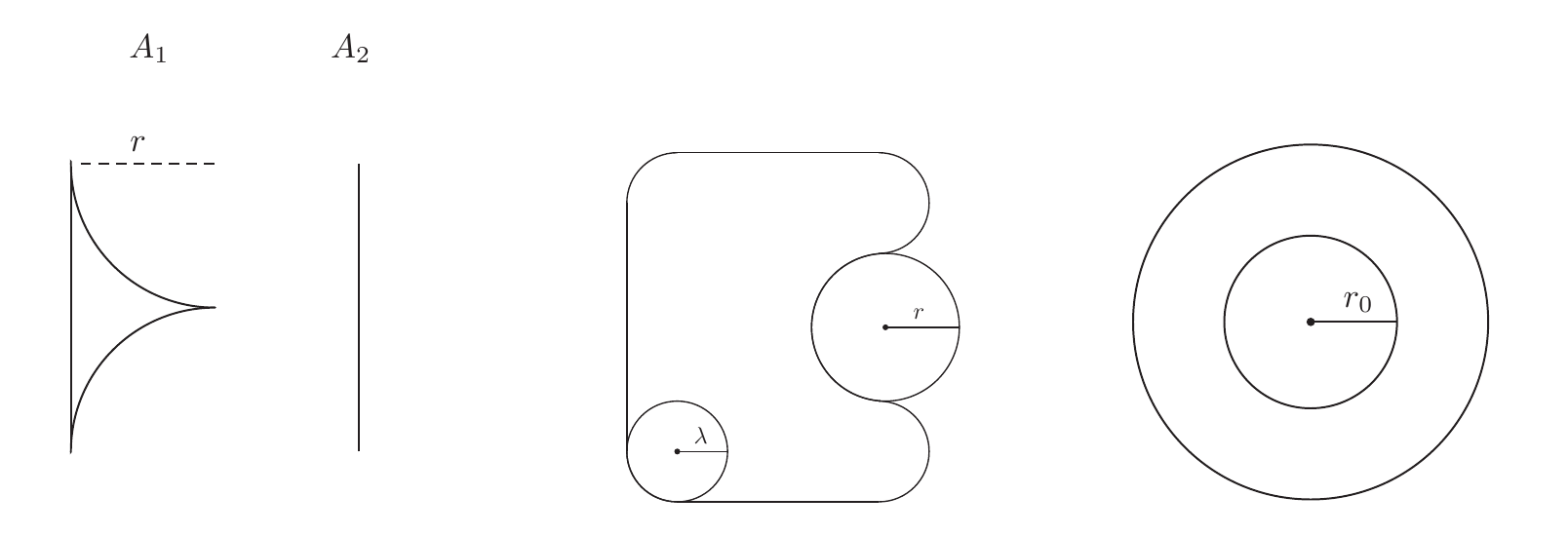}
	
	\caption{$A_1\cup A_2$ fulfills the $r-$rolling condition $\nRightarrow$ $A_1\cup A_2$ is $r-$convex (left). ($R$) is a more general condition (center). Circular ring with inner circle of radius $r_0$ (right).}\label{Figura1}
\end{figure}

\begin{remark}Under certain conditions of $S$ (for instance, $\interior(H(S))\neq \emptyset$), it is verified that $C_{\infty}(S)=H(S)$ where $C_{\infty}(S)=\lim_{r_n\rightarrow\infty}C_{r_n}(S)$. Therefore, if $S$ is assumed to be convex, Proposition 2.4 in Rodr\'iguez-Casal and Saavedra-Nieves (2016) remains true. For more details, see Walther (1999).\end{remark}

\subsection{About regularity conditions on $f$ and its nonparametric estimation}\label{rc}

All through this paper, we assume that the random sample of points, $\mathcal{X}_n$, is generated from a density $f$ that satisfies the next regularity condition:\vspace{4mm}\\($f_{0,1}^L\label{newf}$) The restriction of the density $f$ to $S$ is Lipschitz continuous (there exists $k_f$ such that $\forall x,y\in S$,\\ \textcolor{white}{($f_{0,1}^L$)} $|f(x)-f(y)|\leq k_f\|x-y\|$ and there exists $f_0>0$ such that $f(x)\geq f_0$ for all $x\in S$. Furthermore,\\\textcolor{white}{($f_{0,1}^L$)} $f_1=\max_{x\in S} f(x)$.\vspace{2mm}\\

 As an example in the one-dimensional case, condition ($f_{0,1}^L$) is satisfied by $f(x)=1/(b-a)$ if $x\in[a,b]$ and $f(x)=0$, otherwise where $a$ and $b$ denote two real numbers verifying that $a<b$. \vspace{.05mm}\\

Morever, a non-conventional density estimator will be introduced in Definition \ref{fn}.

\begin{definition}\label{fn} Let $r>0$ and let $Vor(X_i)$ be the Voronoi cell of the point $X_i$ (i.e. $Vor(X_i)=\{x:\|x-X_i\|=\min_{y\in\mathcal{X}_n}\|x-y\|\}$). If $K$ is a kernel function and $f_n(x)=\frac{1}{nh_n^d}\sum K((x-X_i)/h_n)$ denotes the usual kernel density estimator, we define$$\hat{f}_n(x)=\max_{i:x\in Vor(X_i)}f_n(X_i)\mathbb{I}_{x\in C_{r}(\mathcal{X}_n)}.$$\end{definition}

Note that this nonparametric estimator have a non-usual behaviour: it is expected to converge towards the unknown density when the support is $r-$convex, but not when the support is not $r-$convex.

Moreover, some technical hypotheses on the kernel function must be established. \vspace{4mm}\\($\mathcal{K}_{\phi}^p$) The kernel function $K$ belongs to the set of kernels $\mathcal{K}$ such that $K(u)=\phi(p(u))$ where $p$ is a polynomial \textcolor{white}{($\mathcal{K}_{\phi}^p$)} and $\phi$ is a is bounded real function of bounded variation, verifying that $c_K=\int \|u\|K(u)du<\infty$, $K\geq 0$ \textcolor{white}{($\mathcal{K}_{\phi}^p$)} and there exists $r_K$ and $c^{'}_K>0$ such that $K(x)\geq c^{'}_K$ for all $x\in B_{r_K}[0]$.\vspace{2.3mm}\\

Condition ($\mathcal{K}_{\phi}^p$) is satisfied, for instance, by the Gaussian kernel.
\subsection{About maximal spacings and its nonparametric estimation}\label{rc2}
The optimal value of the shape index $r$ to be estimated is just established in Definition \ref{sup}. Some concepts on maximal spacings theory must be handled to propose a consistent estimate of $r$. 

The notion of maximal-spacing in several dimensions was introduced and studied
by Deheuvels (1983) for uniformly distributed data on the unit cube. Later on,
Janson (1987) extended these results to uniformly distributed data on any bounded
set and derived the asymptotic distribution
of different maximal-spacings notions without conditions on the shape of the support $S$. Aaron et al. (2017) generalized the results by Janson (1987) to the non-uniform case.  

The shape of the considered spacings will be defined by a
given set $A\subset \mathbb{R}^d$. For the validity of the theoretical results, it is sufficient to assume that $A$ is a compact and convex set. For practical purposes, the usual choices
are $A = [0, 1]^d$ or $A = B_1[0]$, the closed ball of center $0$ and radius $1$. For a general dimension $d$, the first definition of maximal spacing is that
used by Janson (1987) under the restriction of data are uniformly distributed:
$$\Delta_n^*(\mathcal{X}_n)=\sup\{\gamma:\exists x\mbox{ such that }\{x\}+\gamma A \subset S\setminus \mathcal{X}_n\}.$$
If the Lebesgue measure of the set $A$ is one, $\Delta_n^*(\mathcal{X}_n)^d$ represents the Lebesgue measure of the largest set $\{x\}+\gamma A \subset S\setminus \mathcal{X}_n$. The concept of maximal spacing can be related easily to the maximal inner radius when $A=B_1[0]$. If $\interior(S)\neq \emptyset$, the maximal inner radius of $S$ is defined as
$$\mathcal{R}(S)=\sup\{\gamma>0:\exists x \in S \mbox{ such that }B_{\gamma}[x]\subset S\}.$$Note that the value of the maximal spacing depends on $S$ and also on $\mathcal{X}_n$. However, the definition of the maximal inner radius relies only on $S$.

Aaron et al. (2017) extended the
definition of maximal-spacing assuming that $\mathcal{X}_n$ is drawn according to a density $f$ with bounded support $S$, the Lebesgue measure of the set $A$ is one and its barycentre is the origin of $\mathbb{R}^d$. In this more general setting, the maximal spacing is defined as
$$\Delta_n(\mathcal{X}_n)=\sup\left\{\gamma:\exists x\mbox{ such that }\{x\}+\frac{\gamma}{f(x)^{1/d}}A\subset S\setminus \mathcal{X}_n\right\}$$and
$$V_n(\mathcal{X}_n)=\Delta_n(\mathcal{X}_n)^d.$$The previous definition of maximal spacing relies on density $f$. In this way, it distinguishes between low and high density regions. Throughout this paper, we will assume this latter choice $A =  w_d^{-1/d}B_1[0]$ where $w_d$ denotes the Lebesgue measure of $B_1[0]$.

Janson (1987) calibrated the volume of the maximal spacing under uniformity assumptions without conditions on the shape of the support $S$. The corresponding extension established in Theorem 2 in Aaron et al. (2017) is shown in Theorem \ref{aaronetal} modifying slightly the original hypotheses on $f$ and on the shape of $S$. The result remains true if it is assumed that $S$ is under ($R$) and the density function $f$ satifies ($f_{0,1}^L$).

\begin{theorem}\label{aaronetal} 
Let $\mathcal{X}_n$ be a random and i.i.d sample drawn according to a density $f$ that satisfies ($f_{0,1}^L$) with compact and nonempty support $S$ under ($R$).
Let $U$ be a random variable with distribution
$$
\mathbb{P}(U\leq u) =\exp(-\exp(-u)) \mbox{ for } u\in \mathbb{R}
$$ and let $\beta$ be a constant specified in Janson (1987). Then, we have that
$$U(\mathcal{X}_n)\stackrel{d}{\rightarrow} U\mbox{ when }n\rightarrow\infty,$$
$$\liminf_{n\rightarrow\infty}\frac{nV_n(\mathcal{X}_n)-log(n)}{log(log(n))}\geq d-1\mbox{ a.s.}, \mbox{ }\limsup_{n\rightarrow\infty}\frac{nV_n(\mathcal{X}_n)-log(n)}{log(log(n))}\leq d+1\mbox{ a.s.}$$where
$$U(\mathcal{X}_n)=nV_n(\mathcal{X}_n)-log(n)-(d-1)log(log(n))-log(\beta).$$ 
\end{theorem}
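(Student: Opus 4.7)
My plan is to reduce the statement to Theorem~2 of Aaron et al.\ (2017), whose proof is already available; the work is to check that the modified hypotheses, (R) and ($f_{0,1}^L$), are strong enough to run that argument. The density condition is immediate: ($f_{0,1}^L$) supplies exactly the Lipschitz continuity and positive lower bound $f_0>0$ on $f|_S$ that Aaron et al.\ require in order to rescale the spacing by $f(x)^{1/d}$ and locally reduce, chart by chart, to the uniform case treated by Janson (1987). In particular $f_1 = \max_{x\in S} f(x) < \infty$ is automatic from continuity on the compact set $S$.

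The substantive step is verifying that (R) provides enough geometric regularity. First I would apply Proposition~2.2 of Rodr\'\i guez-Casal and Saavedra-Nieves (2016) to conclude from (R) that $S$ is $r$-convex, and hence (by Cuevas et al.\ 2012) satisfies the inner $r$-rolling condition; together with the outer $\lambda$-rolling property of $S^c$ included in (R), this forces $\partial S$ to be a $\mathcal{C}^1$ $(d{-}1)$-dimensional submanifold with positive reach from both sides (extending Walther 1997, 1999 to $r\neq\lambda$). From this I would extract the single geometric ingredient actually used in the Janson/Aaron et al.\ argument: $S$ is standard, i.e.\ there exist $\delta, c_0 > 0$ with $|S\cap B_\varepsilon(x)|\geq c_0\varepsilon^d$ for every $x\in S$ and every $\varepsilon\leq\delta$, with constants depending only on $\min(r,\lambda)$. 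Uniform standardness is what lets the contribution of spacings centred within $O((\log n/n)^{1/d})$ of $\partial S$ be absorbed in the limit.

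With standardness and ($f_{0,1}^L$) in hand, the distributional statement follows by Poissonization: replace the sample by a Poisson process of intensity $nf$ and control the total variation error; cover $S$ by a grid at scale $(\log n/n)^{1/d}$; and show via the Chen--Stein method that the number of cells containing an $f$-rescaled empty ball of volume exceeding $(\log n + (d{-}1)\log\log n + \log\beta + u)/n$ converges to a Poisson random variable with mean $e^{-u}$. The distribution of the maximum statistic $U(\mathcal{X}_n)$ then converges to the Gumbel law $\mathbb{P}(U\leq u)=\exp(-\exp(-u))$. The almost-sure $\liminf$/$\limsup$ bounds follow by a Borel--Cantelli argument along a geometric subsequence $n_k=2^k$, using tail bounds of the form $\mathbb{P}(nV_n \geq \log n + (d-1+\epsilon)\log\log n)\lesssim (\log n)^{-\epsilon}$ (and the analogous lower tail), combined with the monotonicity-like interpolation of $V_n$ between consecutive $n_k$.

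The main obstacle I anticipate is making the boundary-effect estimate fully uniform under (R) with $r\neq\lambda$, a regime not treated explicitly in Aaron et al. The resolution is quantitative: the two-sided rolling condition gives a uniform lower bound for $|S\cap B_\varepsilon(x)|/\varepsilon^d$ on $\partial S$ in terms of $\min(r,\lambda)$ alone, which is precisely what the Poisson approximation requires to guarantee that the boundary-centred cells contribute a vanishing fraction of the relevant large empty balls, so that the limit depends only on the interior Poissonization.
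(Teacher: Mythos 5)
Your proposal is correct and takes essentially the same route as the paper: both reduce the statement to Theorem~2 of Aaron et al.\ (2017) and verify that ($f_{0,1}^L$) supplies the required H\"older/Lipschitz continuity and lower bound on $f$, while ($R$) supplies the boundary regularity via Walther's theorem that $\partial S$ is a $\mathcal{C}^1$ $(d-1)$-dimensional submanifold. The only difference is which geometric hypothesis of Aaron et al.\ you single out --- you emphasize standardness of $S$, whereas the paper verifies the inner covering number bound $N(\partial S,\epsilon)\leq C_{\partial S}\epsilon^{-(d-1)}$; both follow from the same submanifold fact, and your additional Poissonization/Chen--Stein sketch of the internals of Aaron et al.'s argument, while sound, is not needed once the citation is in place.
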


\begin{remark}The value of constant $\beta$ does not depend on $S$. It is explicitly given in Janson (1987). Concretely,
$$\beta=\frac{1}{d!}\left(\frac{\sqrt{\pi}\Gamma\left(\frac{d}{2}+1\right)}{\Gamma\left(\frac{d+1}{2}\right)}\right)^{d-1}.$$ In particular, for the bidimensional case, $\beta=1$.
\end{remark}

A plug-in estimator of the maximal spacing $\Delta_n(\mathcal{X}_n)$ will be proposed next. Note that the definition of $\Delta_n(\mathcal{X}_n)$ relies on the support $S$ and also on the density function $f$ (both are usually unknown). Under the assumption of $r-$convexity, $S$ will be estimated as $C_{r}(\mathcal{X}_n)$. As for the density function $f$, the new nonparametric density estimator introduced in Definition \ref{fn} will be used. Then, we define the following plug-in estimator of $\Delta_n(\mathcal{X}_n)$:
$$\hat{\delta}(C_{r}(\mathcal{X}_n)\setminus\mathcal{X}_n)=\sup\left\{\gamma:\exists x\mbox{ such that }\{x\}+\frac{\gamma }{\hat{f}_n(x)^{1/d}}A\subset C_{r}(\mathcal{X}_n)\setminus \mathcal{X}_n\right\}.$$
Note that if $S$ is $r-$convex, $\hat{\delta}(C_{r}(\mathcal{X}_n)\setminus\mathcal{X}_n)$ should converge to zero as the sample size increases. However, if $S\subsetneq C_r(S)$, the plug-in estimator of $\Delta_n(\mathcal{X}_n)$ is expected to converge to a positive constant.

\section{A new test for $r-$convexity}\label{testrconvex}

We will introduce a consistent hypothesis test based on $\mathcal{X}_n$ drawn according to an unknown density $f$ on the unknown support $S$, to asses $r-$convexity for a certain $r>0$. This test is crucial for defining an estimator of $r_0$ that would allow the data-driven estimation of the support $S$.

Given $r>0$, the null hypothesis that $S$ is $r-$convex will be tested taking the volume of $\hat{\delta}(C_{r}(\mathcal{X}_n)\setminus\mathcal{X}_n)$ as statistic. The idea that supports this procedure is simple:
Under ($f_{0,1}^L$) and ($R$), Theorem \ref{aaronetal} allows us to detect which values of $V_n(\mathcal{X}_n)$ are large enough to be incompatible with these two assumptions. Since a similar reasoning can be also applied if we consider the volume of $\hat{\delta}(C_{r}(\mathcal{X}_n)\setminus\mathcal{X}_n)$, the test is based on the opposite approach: Under ($f_{0,1}^L$) and ($R$), if the test statistic takes large enough values, it will mean that the selected $r$ is not appropriate and a smaller one should be considered.

The performance of this test can be illustrated using the real database of invasive plants in Azorean islands. Given the sample $\mathcal{X}_{740}$, the practitioner could be interested in testing the null hythothesis that the EOO is $r-$convex, for instance, for $r=5$. According to Figure \ref{fig2} (third row, right), it is clear that large Atlantic Ocean areas are inside $C_{5}(\mathcal{X}_{740})$ and the EOO is overestimated. Moreover, the volume of $\hat{\delta}(C_{5}(\mathcal{X}_{740})\setminus\mathcal{X}_{740})$ will be too large. In fact, although larger samples sizes were considered, its volume would take a constant value (see gray ball inside the EOO reconstruction). Therefore, the null hypothesis of $5-$convexity should be rejected. Note that the situation is the opposite if testing $r-$convexity for $r=0.3$ is the goal. The volume of $\hat{\delta}(C_{0.3}(\mathcal{X}_{740})\setminus\mathcal{X}_{740})$ should be clearly smaller. Furthermore, when the sample size increases, this volume tends to zero. Formally, the asymptotic behaviour of the test is stated in Theorem \ref{test}. 



\begin{theorem}\label{test}Let $r>0$ and let $\mathcal{X}_n$ be a random and i.i.d sample drawn according to a density $f$ that satisfies ($f_{0,1}^L$) with compact and nonempty support $S$ under ($R$). Let $\hat{f}_n$ be the corresponding density estimator introduced in Definition \ref{fn} and let $K$ be the kernel function under ($\mathcal{K}_{\phi}^p$). Assume that $h_n = O(n^{-\zeta})$ for some $0 <\zeta< 1/d$. For the following decision problem,
	$$H_0:\mbox{ }S\mbox{ is }r-\mbox{convex versus }H_1:\mbox{ }S\mbox{ is not }r-\mbox{convex}.$$
	\begin{itemize}
	\item[(a)] The test based on the statistic $\hat{V}_{n,r}=\hat{\delta}(C_{r}(\mathcal{X}_n)\setminus\mathcal{X}_n)^d$ with critical region $RC=\{\hat{V}_{n,r}> c_{n,\alpha}\}$, where
	$$c_{n,\alpha}=\frac{1}{n}(-log(-log(1-\alpha))+log(n)+(d-1)log(log(n))+log(\beta))$$ has an asymptotic level less than $\alpha$.\\
	\item[(b)] Moreover, if $S$ verifying ($R$) is not $r-$convex, the power is $1$ for sufficiently large $n$.
\end{itemize}
\end{theorem}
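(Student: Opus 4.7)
For part (a), my aim is to show that, under $H_0$, the plug-in statistic $\hat V_{n,r}$ is asymptotically dominated by the genuine maximal spacing $V_n(\mathcal X_n)$ of Theorem \ref{aaronetal}; since $c_{n,\alpha}$ is precisely the Gumbel-$\alpha$ quantile on the scale appearing in that theorem, the level bound follows immediately. The key quantitative step is
\[
\hat V_{n,r}\;\le\;(1+\varepsilon_n)\,V_n(\mathcal X_n)\qquad\text{a.s.\ for large }n,
\]
where $\varepsilon_n$ is a deterministic sequence with $\varepsilon_n\log n\to 0$. Two ingredients produce it. First, $H_0$ forces $C_r(\mathcal X_n)\subseteq C_r(S)=S$ almost surely, so every region admissible for $\hat\delta(C_r(\mathcal X_n)\setminus\mathcal X_n)$ sits inside $S\setminus\mathcal X_n$. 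Second, a one-sided uniform estimate $\sup_{x\in C_r(\mathcal X_n)}\hat f_n(x)/f(x)\le 1+\varepsilon_n$ lets one trade the $\hat f_n^{1/d}$-scaling in $\hat V_{n,r}$ for the $f^{1/d}$-scaling used in $V_n$, at the cost of the factor $(1+\varepsilon_n)^{1/d}$. Combining the two inclusions yields the display; then $n\hat V_{n,r}-\log n-(d-1)\log\log n-\log\beta\le U(\mathcal X_n)+o_P(1)$, and Theorem \ref{aaronetal} together with the elementary Gumbel tail calculation $\mathbb P(U>-\log(-\log(1-\alpha)))=\alpha$ delivers $\limsup_n\mathbb P_{H_0}(\hat V_{n,r}>c_{n,\alpha})\le\alpha$.

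The main obstacle is establishing the uniform rate of $\hat f_n$ at a level fine enough to be invisible on the $\log n/n$ scale, i.e.\ $\varepsilon_n\log n\to 0$. I plan to combine three pieces: (i) classical uniform strong consistency of the ordinary kernel estimator $f_n$ on compact sets at rate $O(h_n+\sqrt{\log n/(nh_n^d)})$, available under $(\mathcal K_\phi^p)$, $(f_{0,1}^L)$ and $h_n=O(n^{-\zeta})$ with $0<\zeta<1/d$; (ii) the fact that the $X_i$ realising $\hat f_n(x)$ satisfies $\|x-X_i\|\le d_H(\mathcal X_n,S)=O((\log n/n)^{1/d})$ a.s.\ under $(R)$ and $f\ge f_0$; and (iii) the Lipschitz control from $(f_{0,1}^L)$, which transfers $f_n(X_i)\approx f(X_i)$ into $f_n(X_i)\approx f(x)$. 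A direct check then shows the resulting $\varepsilon_n=O\bigl(n^{-\zeta}+(\log n)^{1/2}n^{(\zeta d-1)/2}+(\log n/n)^{1/d}\bigr)$ satisfies $\varepsilon_n\log n\to 0$ throughout the admissible bandwidth range.

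For part (b), since $S$ satisfies $(R)$ but is not $r$-convex, $C_r(S)\setminus S$ has non-empty interior and therefore contains a closed ball $B_{\rho}[y_0]$ with $\rho>0$; this ball is sample-free because $\mathcal X_n\subseteq S$ a.s. Hausdorff convergence $d_H(C_r(\mathcal X_n),C_r(S))\to 0$ a.s., which follows from $d_H(\mathcal X_n,S)\to 0$ under $(f_{0,1}^L)$ together with the continuity of the $r$-convex hull operator under $(R)$, ensures $B_{\rho/2}[y_0]\subseteq C_r(\mathcal X_n)\setminus\mathcal X_n$ eventually a.s. Because $\|f_n\|_\infty$ is $O_P(1)$ under $(\mathcal K_\phi^p)$ and $(f_{0,1}^L)$, a scaled copy $\{y_0\}+\gamma_0\hat f_n(y_0)^{-1/d}A$ of size bounded away from zero fits inside $B_{\rho/2}[y_0]$, giving $\hat V_{n,r}\ge\gamma_0^d$ almost surely for large $n$. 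Since $c_{n,\alpha}=O(\log n/n)\to 0$, the power tends to $1$, which completes the plan.
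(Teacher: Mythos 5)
Part (a) of your plan is essentially the paper's argument: under $H_0$ the inclusion $C_r(\mathcal X_n)\subset S$ reduces the plug-in spacing to the true one up to the density ratio, and the uniform one-sided bound $\hat f_n/f\le 1+\varepsilon_n$ with $\varepsilon_n\log n\to 0$ (which the paper isolates as Lemma \ref{Lemma5adaptado}(i), proved exactly as you sketch via Gin\'e--Guillou, $d_H(\mathcal X_n,S)=O((\log n/n)^{1/d})$ and Lipschitz continuity) lets the Gumbel limit of Theorem \ref{aaronetal} absorb the correction into $o(1)$. That part is sound and matches the paper.

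Part (b) has the right skeleton (a sample-free ball in $C_r(S)\setminus S$ that eventually sits inside $C_r(\mathcal X_n)$, versus $c_{n,\alpha}\to 0$), but two steps are justified with the wrong tools. First, the inference ``$d_H(C_r(\mathcal X_n),C_r(S))\to 0$ ensures $B_{\rho/2}[y_0]\subset C_r(\mathcal X_n)$'' does not hold: Hausdorff closeness only says every point of $C_r(S)$ is \emph{near} a point of $C_r(\mathcal X_n)$, not that it belongs to it (a set minus a shrinking neighbourhood of $y_0$ is Hausdorff-close but never contains $y_0$). What is needed is an \emph{inner} inclusion, and the paper obtains it via Walther's Lemma 3 and Minkowski-erosion identities: $S\oplus r^*B\subset\mathcal X_n\oplus rB$ eventually a.s.\ for a suitable $r^*<r$ with $S\subsetneq C_{r^*}(S)$, whence $C_{r^*}(S)\ominus(r-r^*)B\subset C_r(\mathcal X_n)$ and a slightly shrunken ball $B_{\rho/3}[x]$ lands inside $C_r(\mathcal X_n)$. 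You would need to replace your continuity argument by this (or an equivalent inner-parallel-set) statement. Second, to fit a copy $\{y_0\}+\gamma_0\hat f_n(y_0)^{-1/d}A$ of \emph{fixed} size $\gamma_0>0$ inside the ball you need $\gamma_0\le c\,\hat f_n(y_0)^{1/d}$, i.e.\ a \emph{lower} bound $\hat f_n\ge\lambda_0^d>0$ on $C_r(\mathcal X_n)$ (the paper's Lemma \ref{Lemma5adaptado}(ii)); the upper bound $\|f_n\|_\infty=O_P(1)$ that you invoke points in the wrong direction and, were $\hat f_n(y_0)$ allowed to degenerate to $0$, no fixed $\gamma_0$ would work. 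Both defects are repairable with the lemmas the paper actually uses, but as written they are genuine gaps.
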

\begin{remark}
Note that the optimal kernel sequence size, $h_n=h_0n^{1/(d+4)}$, satisfies the hypotheses under which Theorem \ref{test} holds. Therefore, any reasonable bandwidth selector should be suitable for testing $r-$convexity.
\end{remark}

\subsection{Selection and consistency results of the optimal smoothing parameter}\label{resultadosmaximo2}

The optimal estimation of the smoothing parameter $r_0$ from  $\mathcal{X}_n$ is based on the test previously proposed. Specifically, according to Definition \ref{sup}, $r_0$ will be estimated by
\begin{equation}\label{r0estimador}
\hat{r}_{0}=\sup\{\gamma>0:  \mbox{ The null hypothesis }H_0 \mbox{ that } S \mbox{ is } \gamma-\mbox{convex is accepted}\}.
\end{equation}
That is, it is proposed to select the
largest value of $\gamma$ compatible with the $\gamma-$convexity assumption. Note that this choice depends on the significance level of the test. Again, we use the example of invasive plants in Azorean islands in order to analyze this estimator. Under ($f_{0,1}^L$) and ($R$), if the volume of $\hat{\delta}(C_{\gamma}(\mathcal{X}_n)\setminus \mathcal{X}_n)$ is large enough, then the null hypothesis of $\gamma-$convexity will be rejected. Therefore, a smaller value of $\gamma$ should be selected. This case corresponds to Figure \ref{fig2} (third row, right) taking $\gamma=5$. However, the situation is completely opposite in Figure \ref{fig2} (second row, right) when $\gamma=0.03$. Here, the size of the maximal spacing found in $C_{0.03}(\mathcal{X}_{740})\setminus \mathcal{X}_{740}$ does not allow to reject that the support is $0.03-$convex. As a consequence, a bigger $\gamma$ than $0.03$ should be considered.

The technical properties for the estimator of $r_0$ are considered next. First, the existence of the supreme defined in (\ref{r0estimador}) must be guaranteed, a result which is proved in Theorem \ref{consistencia}. In addition, it is also proved that $\hat{r}_0$ consistently estimates $r_0$.

\begin{theorem}\label{consistencia}Let $f$ be a density function that satisfies ($f_{0,1}^L$) with compact, nonconvex and nonempty support $S$ under ($R$). Let $\hat{f}_n$ be the density estimator introduced in Definition \ref{fn} and let $K$ be the kernel function under ($\mathcal{K}_{\phi}^p$). Assume that $h_n = O(n^{-\zeta})$ for some $0 <\zeta< 1/d$. Let $r_0$ be the parameter
	defined in (\ref{maximo2}) and $\hat{r}_{0}$ defined in (\ref{r0estimador}). Let $\{\alpha_n\}\subset (0,1)$ be a sequence converging to zero such that $\log(\alpha_n)/n\rightarrow 0$. Then, $\hat{r}_0$ converges to $r_0$ in probability.
\end{theorem}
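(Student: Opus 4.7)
The plan is to verify $\hat r_0 \to r_0$ in probability by establishing separately the two one-sided statements $P(\hat r_0 < r_0-\epsilon)\to 0$ and $P(\hat r_0 > r_0+\epsilon)\to 0$ for every $\epsilon\in(0,r_0)$. These correspond, respectively, to the type-I and power conclusions of Theorem \ref{test}, applied at a cleverly chosen single value of $\gamma$, combined with the structural fact (Proposition 2.4 of Rodr\'{\i}guez-Casal and Saavedra-Nieves, 2016, invoked in the preamble) that the supremum in Definition \ref{sup} is actually attained, so that $S$ is $\gamma$-convex for all $\gamma\leq r_0$ and fails to be $\gamma$-convex for all $\gamma>r_0$.

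For the lower bound I would fix $\gamma=r_0-\epsilon$. Since $S$ is $\gamma$-convex, the null hypothesis of Theorem \ref{test} holds at $\gamma$, and part (a) applied at significance level $\alpha_n$ yields acceptance with probability tending to one. Because $\gamma$ already belongs to the set defining $\hat r_0$, this gives $\hat r_0\geq r_0-\epsilon$ with probability tending to one. The extension of the type-I conclusion from fixed $\alpha$ to the vanishing sequence $\alpha_n$ is the only delicate point, and it uses the hypothesis $\log(\alpha_n)/n\to 0$: the extra correction $-\log(-\log(1-\alpha_n))/n\sim -\log(\alpha_n)/n$ in $c_{n,\alpha_n}$ then stays $o(1)$, so that the Gumbel limit of Theorem \ref{aaronetal}, together with the comparison between $\hat V_{n,\gamma}$ and the true spacing $V_n(\mathcal X_n)$ already performed in the proof of Theorem \ref{test}(a), still forces $n\hat V_{n,\gamma}-n c_{n,\alpha_n}\to -\infty$ in probability.

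For the upper bound I would fix $\gamma=r_0+\epsilon$, so that $S$ is not $\gamma$-convex, and use Theorem \ref{test}(b) at level $\alpha_n$ to get rejection at $\gamma$ with probability tending to one. To convert rejection at a single value into the global conclusion $\hat r_0\leq\gamma$, I would exploit monotonicity of the test statistic. Directly from Definition \ref{fn} and the definition of $\hat\delta$, the inclusion $C_{r_1}(\mathcal X_n)\subseteq C_{r_2}(\mathcal X_n)$ for $r_1\leq r_2$, together with the fact that $\hat f_n$ depends on $r$ only through the indicator $\mathbb{I}_{x\in C_r(\mathcal X_n)}$ and hence agrees on $C_{r_1}(\mathcal X_n)$ whether computed for $r_1$ or for $r_2$, implies that any pair $(\gamma',x)$ admissible in the definition of $\hat V_{n,r_1}$ is also admissible in the definition of $\hat V_{n,r_2}$. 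Thus $r\mapsto \hat V_{n,r}$ is non-decreasing, and on the event $\{\hat V_{n,\gamma}>c_{n,\alpha_n}\}$ the test rejects at every $r'\geq\gamma$, so the set defining $\hat r_0$ is contained in $(0,\gamma)$ and $\hat r_0\leq \gamma$.

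The main obstacle should be the careful handling of the vanishing level $\alpha_n$: Theorem \ref{test} is formulated at fixed $\alpha\in(0,1)$, but the consistency argument needs both type-I error and power to behave well simultaneously along $\alpha_n\to 0$. The condition $\log(\alpha_n)/n\to 0$ is precisely the calibration that keeps $c_{n,\alpha_n}$ of order $(\log n)/n$: fast enough to force the asymptotic rejection probability to zero under $H_0$ via the Gumbel limit, and slow enough to remain well below the strictly positive limit of $\hat V_{n,\gamma}$ under $H_1$ that is guaranteed by the plug-in construction. Once this calibration is checked, the two one-sided bounds combine to give the desired convergence in probability.
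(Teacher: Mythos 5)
Your proposal is correct and follows essentially the same route as the paper: the paper splits the claim into Proposition \ref{alpha} (acceptance at $r_0$ itself, via Lemma \ref{Lemma5adaptado} and the Gumbel limit of Theorem \ref{aaronetal}, giving $\mathbb{P}(\hat r_0\geq r_0)\to 1$) and Proposition \ref{prop_nosobreestimamos} (a persistent spacing of fixed positive radius inside $C_{r_0+\epsilon}(\mathcal X_n)$ from Lemma \ref{lem_spacing_minimo}, plus monotonicity of $r\mapsto \hat V_{n,r}$ and $c_{n,\alpha_n}\to 0$), which is exactly your two one-sided bounds. The only small imprecision is that the condition $\log(\alpha_n)/n\to 0$ is really needed on the power side (to keep $c_{n,\alpha_n}\to 0$ below the positive limit of the statistic), while on the level side $\alpha_n\to 0$ alone suffices since it sends the threshold $-\log(-\log(1-\alpha_n))$ to $+\infty$; your closing paragraph states the calibration correctly, so this does not affect the argument.
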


\begin{remark}
For the sake of clarity, $S$ is assumed non-convex throughout the test. However, if $S$ is convex, it can be shown that
	$\hat{r}_0$ goes to infinity (which is the value of $r_0$ in this case)
	because, with high probability, the test is not rejected for all values of $r$.
\end{remark}

\section{Consistency of resulting support estimator}\label{cons}

The behaviour of the random set $C_{\hat{r}_0}(\mathcal{X}_n)$ as an estimator of $S$ can be studied once the consistency of $\hat{r}_0$ has been proved. Two metrics between sets are usually considered in order to assess the performance of a support estimator. Specifically, let $A$
and $C$ be two closed, bounded, nonempty subsets of $\mathbb{R}^{d}$. The
Hausdorff distance between $A$ and $C$ is defined by\vspace{-0.13cm}
$$
d_{H}(A,C)=\max\left\{\sup_{a\in A}d(a,C),\sup_{c\in C}d(c,A)\right\},\vspace{-0.13cm}
$$where $d(a,C)=\inf\{\|a-c\|:c\in C\}$ and $\|\mbox{ }\|$ denotes the Euclidean norm. Besides, if $A$ and $C$
are two bounded and Borel sets then the distance in measure between $A$ and $C$ is defined by $d_{\mu}(A,C)=\mu(A\triangle C)$, where $\mu$ denotes the Lebesgue measure and $\triangle$, the symmetric difference, that is, $A\triangle C=(A \setminus C)\cup(C \setminus A). $
Hausdorff distance quantifies the physical proximity between two sets whereas the distance in measure
is useful to quantify their similarity in content. However, neither of these distances are completely useful for measuring
the similarity between the shape of two sets. The Hausdorff distance between boundaries, $d_H(\partial A,\partial C)$, can be also used to evaluate the
performance of the estimators (see Ba\'illo and Cuevas, 2001; Cuevas and Rodr\'iguez-Casal, 2004; Rodr\'iguez-Casal, 2007 or Genovese et al., 2012).

In particular, if $\lim_{r\rightarrow r_0^+}d_H(S, C_{r}(S))=0$ then, the
consistency of $C_{\hat{r}_0}(\mathcal{X}_n)$ can be proved easily from Theorem \ref{consistencia}. However, the
consistency cannot be guaranteed if $d_H(S, C_{r}(S))$ does not go to zero as $r$ goes to $r_0$ from above (as $\hat{r}_0$ does, see Proposition \ref{alpha} below).
This problem can be solved by considering the estimator
$C_{r_n}(\mathcal{X}_n)$ where $r_n=\nu \hat{r}_0$ with $\nu\in (0,1)$ fixed. This ensures that, for $n$ large enough, with high probability,
$C_{r_n}(\mathcal{X}_n)\subset S$. From the practical point of view the selection of $\nu$ is not a major issue because
$\hat{r}_0$ is numerically approximated and the
computed estimator always satisfies this property without multiplying by $\nu$. In some sense, Theorem \ref{consistencia2}
gives the convergence rate of the numerical approximation of $\hat{r}_0$.

\begin{theorem}\label{consistencia2}Let $\mathcal{X}_n$ be a random and i.i.d sample drawn according to a density $f$ that satisfies ($f_{0,1}^L$) with compact, nonconvex and nonempty support $S$ under ($R$). Let $r_0$ be the parameter defined in (\ref{maximo2})  and $\hat{r}_{0}$ defined in (\ref{r0estimador}). Let $\{\alpha_n\}\subset (0,1)$ be a sequence converging to zero such that $\log(\alpha_n)/n\rightarrow 0$. Let be $\nu \in (0,1)$ and $r_n=\nu \hat{r}_0$. Then,  
	$$
	d_H(S, C_{r_n}(\mathcal{X}_n))=O_P\left(\frac{\log n}{n}\right)^{\frac{2}{d+1}}.
	$$
	The same convergence order holds for $d_H(\partial S, \partial C_{r_n}(\mathcal{X}_n))$ and $d_\mu(S\triangle C_{r_n}(\mathcal{X}_n))$.
\end{theorem}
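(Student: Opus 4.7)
The plan is to reduce the analysis of $C_{r_n}(\mathcal{X}_n)$, which has a random radius, to the known behaviour of $C_r(\mathcal{X}_n)$ for a fixed deterministic $r$. The bridge is a simple sandwich that exploits two basic facts: (i) for $A$ fixed, $C_r(A)$ is monotone non-decreasing in $r$ (any ball of radius $r$ disjoint from $A$ sits inside a ball of radius $r' \ge r$ disjoint from $A$, so $C_r(A) \subseteq C_{r'}(A)$); and (ii) since $S$ is $r_0$-convex, $S$ is $r$-convex for every $r \le r_0$, so $C_r(S)=S$ on that range. Together, these let us bracket $C_{r_n}(\mathcal{X}_n)$ between two deterministic $r$-convex hulls of the sample whenever $r_n$ lies in a fixed compact window around $\nu r_0$.

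First I would fix $a=\nu r_0/2 \in (0,\nu r_0)$ and invoke Theorem \ref{consistencia}: $\hat{r}_0\to r_0$ in probability, so $r_n=\nu\hat{r}_0\to \nu r_0$ in probability. Therefore the event $E_n=\{a\le r_n\le r_0\}$ satisfies $P(E_n)\to 1$. The upper bound $r_n\le r_0$ is precisely what forces the shrinkage $\nu<1$: the test underlying $\hat{r}_0$ only has asymptotic level $\alpha$, so $\hat{r}_0$ may exceed $r_0$ from above (the content of the forthcoming Proposition referenced as $\alpha$), while $\nu\hat{r}_0$ falls below $r_0$ with probability tending to one.

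On $E_n$, monotonicity together with $C_{r_n}(S)=S$ gives
$$C_a(\mathcal{X}_n)\subseteq C_{r_n}(\mathcal{X}_n)\subseteq C_{r_n}(S)=S.$$
In particular the one-sided distance $\sup_{c\in C_{r_n}(\mathcal{X}_n)} d(c,S)$ vanishes and
$$d_H(S,C_{r_n}(\mathcal{X}_n))=\sup_{s\in S}d(s,C_{r_n}(\mathcal{X}_n))\le d_H(S,C_a(\mathcal{X}_n)).$$
The right-hand side is a deterministic-radius $r$-convex hull estimator with $a\in(0,r_0)$ fixed, under exactly the hypotheses ($R$) and ($f_{0,1}^L$) used by Rodr\'iguez-Casal (2007), whose Theorem yields the rate $O_P\bigl((\log n/n)^{2/(d+1)}\bigr)$ for both $d_H(S,C_a(\mathcal{X}_n))$ and $d_H(\partial S,\partial C_a(\mathcal{X}_n))$, and the same rate for $d_\mu$. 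Combining with $P(E_n^c)\to 0$, and using that all estimators lie in a bounded region so the relevant distances are almost surely bounded by a constant on $E_n^c$, produces the stated $O_P$ bound. The boundary and measure versions follow by the same sandwich: for the measure distance, $C_{r_n}(\mathcal{X}_n)\subseteq S$ turns $S\triangle C_{r_n}(\mathcal{X}_n)$ into $S\setminus C_{r_n}(\mathcal{X}_n)\subseteq S\setminus C_a(\mathcal{X}_n)$; for the boundary, ($R$) provides the standard domination of $d_H(\partial\cdot,\partial\cdot)$ by $d_H(\cdot,\cdot)$ on pairs of sufficiently regular sets.

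The only real obstacle is the randomness of $r_n$, and that obstacle is dissolved entirely by the monotonicity-based sandwich. Once $r_n$ is squeezed into the deterministic interval $[a,r_0]$, no uniformity in $r$ is needed: one applies the already-established single-radius rate at $r=a$ and inherits the same rate for $C_{r_n}(\mathcal{X}_n)$. This is precisely the role of the shrinkage factor $\nu\in(0,1)$ in the statement.
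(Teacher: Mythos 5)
Your argument is essentially the paper's own proof: sandwich $C_{r_n}(\mathcal{X}_n)$ between a fixed-radius hull and $S$ on an event of probability tending to one (using Theorem \ref{consistencia} and the monotonicity of $r\mapsto C_r(\cdot)$ together with $C_{r_n}(S)=S$ for $r_n\le r_0$), then invoke the fixed-radius rate from Rodr\'iguez-Casal (2007). The one point to tighten is your choice of the deterministic lower radius: the cited rate theorem requires the two-sided rolling condition with a \emph{common} radius, so the fixed radius must satisfy $a\le\min\{r,\lambda\}$ (the two rolling radii in ($R$)) in addition to $a<\nu r_0$; your $a=\nu r_0/2$ could exceed $\lambda$, which is why the paper instead fixes $\widetilde{r}\le\min\{r,\lambda\}$ with $\widetilde{r}<\nu r_0$ --- a one-line repair of your choice.
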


 \section{Numerical illustration}\label{numerical}

 The main numerical aspects of the estimation algorithm of $r_0$ in (\ref{maximo2}) are detailed in what follows. Although  the method proposed in this work is fully data-driven from a theoretical point of view, its practical implementation depends on the specification of two parameters to be selected by the practitioner: the significance level of the test $\alpha$ and the maximum number for connected components $\mathcal{C}$ of the resulting support estimator. Choosing them is a much more flexible and simpler problem than the specification of the shape index $r_0$.
 
 With probability one, for a large enough $n$, the existence of the estimator $\hat{r}_0$ defined in (\ref{r0estimador}) is guaranteed under the hypotheses of Theorem \ref{consistencia}. However, in practice, this estimator might not exist for a specific sample $\mathcal{X}_n$ and a given value of
 the significance level $\alpha$. Therefore, the influence of $\alpha$ must be taken into account. The null hypothesis of $r-$convexity
 will be (incorrectly) rejected for $0< r\leq r_0$ with
 probability $\alpha$, approximately. This is not important from the theoretical point of view, since we are assuming that $\alpha=\alpha_n$ goes to zero as
 the sample size increases. But, what should be done, for a given sample, if $H_0$ is rejected for {\it all} $r$ (or at least {\it all} reasonable values of $r$)? In order to fix a minimum acceptable
 value of $r$, it is assumed that $S$ (and, hence, its estimator) will
 have no more than $\mathcal{C}$ connected components. Too fragmented estimators will not be considered even in the case that we reject $H_0$ for all $r$. The minimum value
 that ensures a number of connected components not greater than $\mathcal{C}$ will be taken in this latter case. Therefore, this parameter $\mathcal{C}$ can be interpreted as a geometric stopping criteria that does not appear in theoretical results because the sequence $\alpha_n$ is assumed to tend to zero.

 Dichotomy algorithms can be used to compute $\hat{r}_0$. The practitioner
 must select a maximum number of iterations $I$ and two initial points $r_m$ and $r_M$ with $r_m<r_M$ such that the null hypothesis of $r_M-$convexity is rejected and the null hypothesis of $r_m-$convexity is accepted. According to the
 previous comments, it is assumed that the number of connected components of $C_{r_m}(\mathcal{X}_n)$ must not be greater than $\mathcal{C}$. Choosing a value close
 enough to zero is usually sufficient to select $r_m$. According to Figure \ref{fig2} (second row, right), the maximal spacing in $C_{0.03}(\mathcal{X}_n)$ will be small enough to accept $0.03-$convexity. Therefore, taking $r_m\leq 0.03$ will be a good choice. However, if selecting this $r_m$ is not possible because, for very low values of $r$, the hypothesis
 of $r-$convexity is still rejected then $r_0$ is estimated as the positive closest value to zero $r$ such
 that the number of connected components of $C_r(\mathcal{X}_n)$ is smaller than or equal to $\mathcal{C}$. On the other hand, if a large enough spacing for having a statistically significant test cannot be found in $H(\mathcal{X}_{n})$ then we propose $H(\mathcal{X}_n)$ as the estimator for the support.
 
 To sum up, the following inputs should be given: the significance level $\alpha\in (0,1)$, a maximum number of iterations $I$, a maximum number of connected components $\mathcal{C}$ and two initial
 values $r_m$ and $r_M$. Given these parameters $\hat{r}_0$ will be computed as follows:  \vspace{.08cm}
 
 \begin{enumerate}
 	\item In each iteration and while the number of them is smaller than $I$:\vspace{.08cm}
 	\begin{enumerate}
 		\item $r=(r_m+r_M)/2.$\vspace{.08cm}
 		\item If the null hypothesis of $r-$convexity is not rejected then $r_m=r$.\vspace{.08cm}
 		\item Otherwise, $r_M=r$.\vspace{.08cm}
 	\end{enumerate}
 	\item Then, $\hat{r}_0=r_m$.\vspace{.08cm}
 \end{enumerate}

 Some technical aspects related to the computation of the
 maximal spacings must be also mentioned. In the proposed procedure, the null hypothesis needs to be tested $I$ times. Since it involves the  calculation of the maximal spacing, one may be aware of computational cost of the method. Nevertheless, as noted by Rodr\'iguez-Casal and Saavedra-Nieves (2016), this maximal spacing does not need to be specifically determined and it is enough to check if there exists a point $x$ such that $$x+\frac{c_{n,\alpha}^{1/d}}{\hat{f}_n^{1/d}(x)}A\subset C_r(\mathcal{X}_n)\setminus \mathcal{X}_n.$$In this case, $\hat{V}_{n,r}\geq c_{n,\alpha}$ and, therefore, the null hypothesis of $r-$convexity will be rejected. Furthermore, note that if this disc exists then $x\notin B_{c_{n,\alpha}^{x,w}}(X_k)$ where $c_{n,\alpha}^{x,w}=c_{n,\alpha}^{1/d}w_d^{-1/d}\hat{f}_n^{-1/d}(x)$ and $X_k$ denotes the sample point such that $x\in Vor(X_k)$. Therefore, $\hat{f}_n(x)=f_n(X_k)$.\\ 
 Then, the centers of the possible maximal balls that belong to the Voronoi tile with nucleus $X_i$ ($i,\cdots,n$) necessarily
 lie in $B_{c_{n,\alpha}^{X_i,w}}(X_i)^c\cap Vor(X_i)$. 
 We will follow the next steps:\vspace{.08cm}
 \begin{enumerate}
 	\item Determine the set of candidates for ball centers  $D(r) = C_{r}(\mathcal{X}_n)\cap \bigcup_{X_i\in E(m)} (\partial B_{c_{n,\alpha}^{X_i,w}}(X_i)\cap Vor(X_i))$ where $E(m)\subset \mathcal{X}_n$ denotes the extremes of the $m-$shape of $\mathcal{X}_n$ when $m=\min\left\{c_{n,\alpha}^{X_j,w}:X_j\in \mathcal{X}_n\right\}$, see Edelsbrunner (2014). If $x\in D(r)$ then we can guarantee that $B_{c_{n,\alpha}^{X_i,w}}(x)\cap \mathcal{X}_n=\emptyset$. Equivalently,
 	$$x+\frac{c_{n,\alpha}^{1/d}}{\hat{f}_n^{1/d}(x)}A\subset C_r(\mathcal{X}_n\setminus \mathcal{X}_n ).$$\vspace{.08cm}
 	\item Calculate $M(r)=\max\{d(x,\partial C_{r}(\mathcal{X}_n):x\in D(r)\}$.\vspace{.08cm}
 	\item If $M(r)\leq \hat{c}_{n,\alpha}$ then the null hypothesis of $r-$convexity is not rejected.\vspace{.08cm}
 \end{enumerate}
 
 It should be noted that if $X_i\notin E(m)$, for all $x\in B_{c_{n,\alpha}^{X_i,w}}(X_i)^c\cap Vor(X_i)$, $B_{c_{n,\alpha}^{X_i,w}}(x)\cap \mathcal{X}_n\neq\emptyset$. Therefore, these points can be discarded in order to determine $D(r)$. Furthermore, $E(m)$, $\partial C_{r}(\mathcal{X}_n)$ and $\partial B_{\hat{c}_{n,\alpha,r}^*}(\mathcal{X}_n)$ can be easily computed (at least for the bidimensional case). See Pateiro-L\'opez and Rodr\'iguez-Casal (2010) for further details.

 \section{Extent of occurrence estimation}\label{reald}

The new support estimator  introduced in this work  will be used for reconstructing the EOO of an assemblage of terrestrial invasive plants in two islands of the Azores Archipelago, Terceira and S\~ao Miguel. For this real dataset, we have shown that convexity assumption is very restrictive. According to Figure \ref{fig2} (first and second rows, left), sea areas are inside the classical estimator of the EOO. Obviously, it is overestimated given that terrestrial invasive plants does not occupy the Atlantic Ocean. The goal here is to reconstruct the EOO overcoming these limitations.

First, it is necessary to estimate the optimal value $r_0$ from the sample of $740$ geographical locations. If we select the significance level $\alpha$ equal to $0.01$ and $\mathcal{C}= 4$, the resulting estimator is  $\hat{r}_0=0.127$. In Figure \ref{fig3}, $C_{\hat{r}_0}(\mathcal{X}_{740})$ is shown. According to the results obtained, the EOO reconstruction has two different connected components corresponding to the two Azorean islands. Unlike classical EOO estimator, sea areas are not inside the reconstruction. Therefore, if the sample size is large enough, a more sophisticated and realistic estimator of the EOO can be determined.

\begin{figure}\vspace{2cm}
	\hspace{-1.5cm} \includegraphics[scale=1]{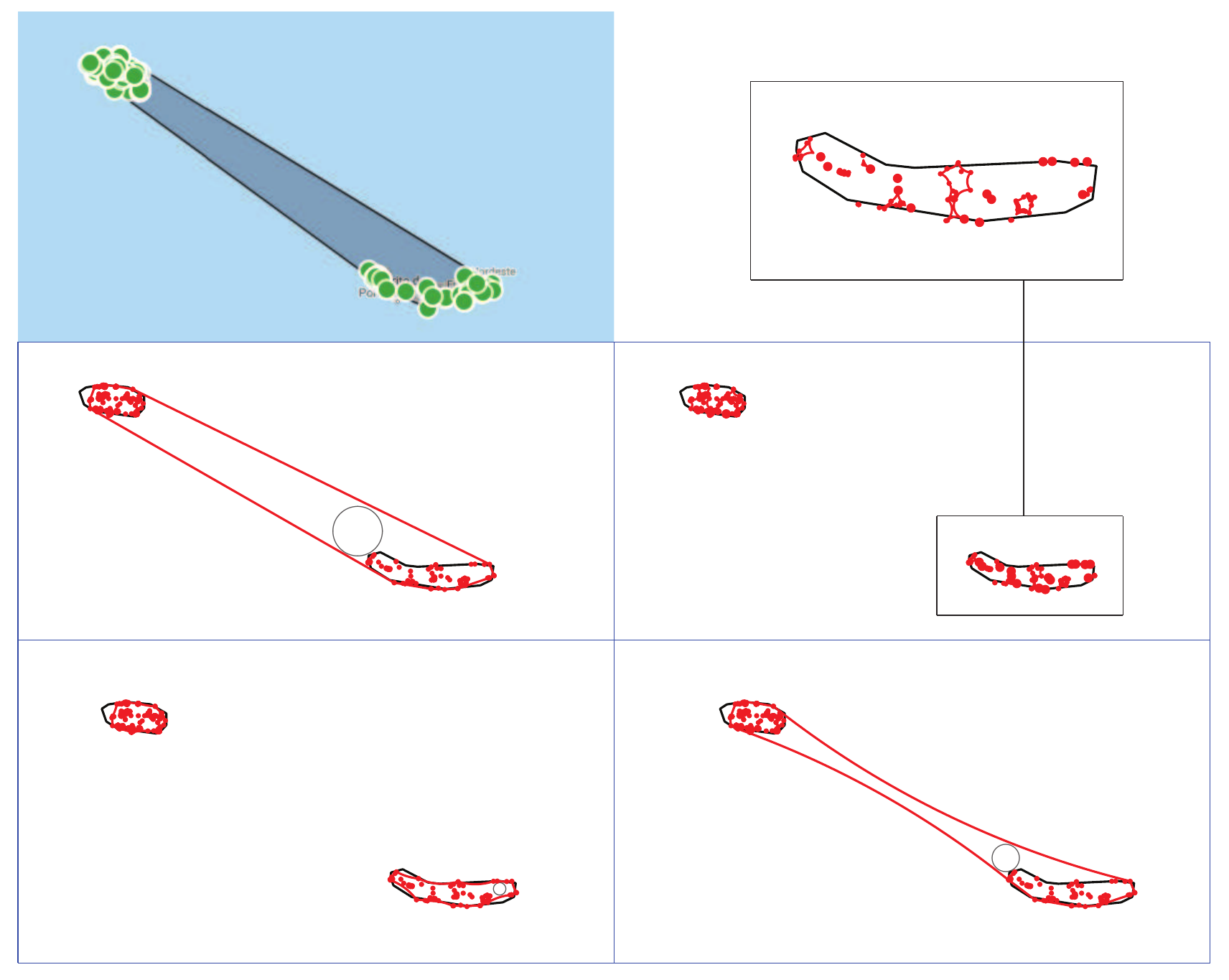}  
	\caption{In the first row, GeoCAT reconstruction of the EOO determined from the sample of $740$ geographical locations in two Azorean islands (left). In the second row (red color), $H(\mathcal{X}_{740})$ (left) and $C_{0.03}(\mathcal{X}_{740})$ (right). In the third row (red color), $C_{0.3}(\mathcal{X}_{740})$ (left) and 
		$C_{5}(\mathcal{X}_{740})$ (right).}\label{fig2}
\end{figure}

\begin{figure} 
	\includegraphics[scale=1]{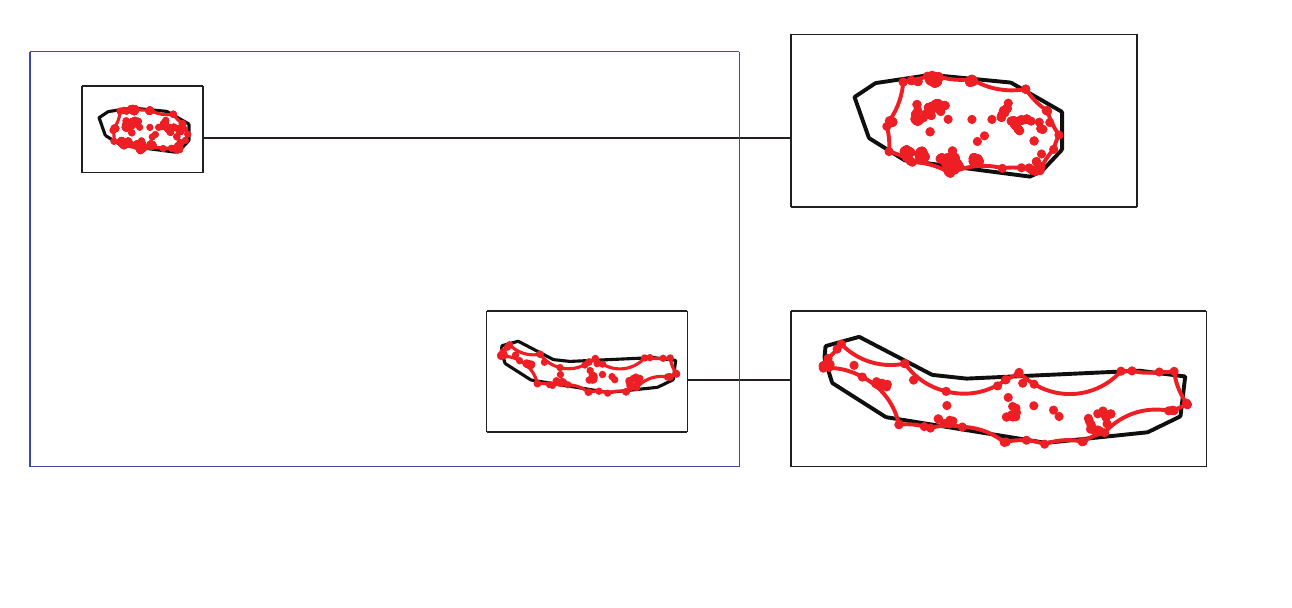}  
	\caption{EOO estimator, $C_{\hat{r}_0}(\mathcal{X}_{740})$ where $\hat{r}_0=0.127$.}\label{fig3}
\end{figure}

 \begin{figure}
\hspace{-.5cm}  	\includegraphics[scale=1]{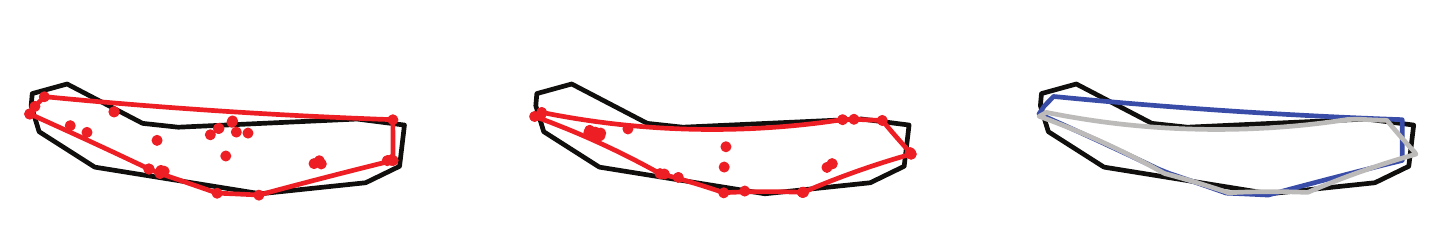}  
 	\caption{EOO estimator in 2015, $H(\mathcal{X}_{33})$ (left); EOO estimator in 2016, $C_{\hat{r}_0}(\mathcal{X}_{48})$ where $\hat{r}_0=1.5$ (center); EOO estimators in 2015 (blue) and 2016 (gray) (right).}\label{fig4}
 \end{figure}

The new method, although designed for handling more complex situations, provides similar reconstructions to those corresponding to the convex hull in those cases where the classical reconstruction works appropiately. For showing this, we will focus on the geographical locations from S\~ao Miguel island. Separately, the EOO will be estimated from data corresponding to years $2015$ and $2016$. A total of $33$ and $48$ geographical locations are available in $2015$ and $2016$, respectively. 

Figure \ref{fig4} contains the EOO estimator in $2015$ (left) and $2016$ (center). In 2015, the resulting reconstruction of the EOO is equal to $H(\mathcal{X}_{33})$. In 2016, $\hat{r}_0=1.5$; however, the estimation of the EOO obtained, $C_{1.5}(\mathcal{X}_{48})$, is not so different from the convex hull. This last illustration suggests that, if more amount of data are available by year, this kind of analysis could be useful for studying the temporal changes in the spatial pattern of organisms, including invasive plants, on an area of interest.
 
\section{Conclusions and open problems}\label{con}

The main goal of this work is to propose a new data-driven method for reconstructing a $r-$convex support in a consistent way. The route designed to reach this goal can be summarized as follows: (1) Defining the optimal value of $r$, $r_0$, to be estimated, (2) establishing a nonparametric test to asess the null hypothesis that $S$ is $r-$convex for a given $r>0$, (3) defining the estimator of $r_0$ that strongly relies on the previous test and (4) checking that the estimator of $r_0$ and the resulting support reconstruction are consistent.

The definition of the estimator $\hat{r}_0$ depends on the $r-$convexity test established that, of course, could be used in an independent way. In many practical situations where the support is completely unknown and only a sample of points is available, it can be interesting to test if the corresponding support distribution is $r-$convex.

Futhermore, the behaviour of the proposed support estimator was illustrated through the estimation of the EOO of an assemblage of terrestrial invasive plants in two Azorean islands. In this particular case, where convexity assumption on the EOO is too restrictive, our support estimator provides a more realistic and sophisticated reconstruction. Besides, we have shown that when the classical convex reconstruction works appropiately, our estimator offers similar reconstructions. Furthermore, we have shown that estimating the EOO from annual (or any other time period) occurrences could be useful for detecting temporal changes in the spatial pattern of organisms.

Note that the resulting support estimator is spatially
flexible. In other words, it is able to distinguish the different disconnected components of the support. Therefore, it could be used for reconstructing the support of an intensity function of a Poisson process.

Finally, another interesting problem and intimately related to the EOO reconstruction is to estimate the area of occupancy (AOO). The IUCN defined the AOO as the area within its extent of occurrence. Under $r-$convexity, we could estimate the AOO as the area of the $r-$convex hull of the sample points. However, this estimator suffers from the drawback of not being rate-optimal. Arias-Castro et al. (2018) proposed an optimal volume estimator based on the sample $r-$convex hull using a sample splitting strategy that attains the minimax lower bound. Therefore, the problem of estimating the AOO could be studied from a different perspective in future.

 \section{Proofs}\label{proof}
 In this section the proofs of the stated theorems are presented. 
 
$ $\\\emph{Proof of Theorem \ref{aaronetal}.}\vspace{2mm}\\
 	
 	First, Aaron et al. (2017) assumed that $f$ is H\"older continuous with respect to Lebesgue measure. Under ($f_{0,1}^L$), this condition is satisfied. See Aaron et al. (2017) for more details.

 	Furthermore, Aaron et al. (2017) also assumed that there exists $k<d$ and $C_{\partial S}>0$ such that $N(\partial S,\epsilon)\leq C_{\partial S}\epsilon^{-k}$ where $N(\partial S,\epsilon)$ denotes the inner covering number of $\partial S$. Under ($R$), Theorem 1 in Walther (1997) guaranteed that $\partial S$ is a $\mathcal{C}^1$ $(d-1)-$dimensional submanifold. Therefore, the previous assumption is fulfilled for $k = d-1$. See Aaron et al. (2017) for more details. 	
 	 
$ $\\\emph{Proof of Theorem \ref{test}.}\vspace{2mm}\\
First, we will prove (a) and then, (b).
\begin{itemize}
		\item[(a)] Under $H_0$ ($C_{r}(S)=S$), $C_{r}(\mathcal{X}_n)\subset S$. Then,
		$$\hat{\delta}(C_r(\mathcal{X}_n)\setminus\mathcal{X}_n)\leq\sup\left\{\gamma:\exists x\mbox{ such that }\{x\}+\frac{\gamma }{\hat{f}_n(x)^{1/d}}A\subset S\setminus \mathcal{X}_n\right\}.$$
	If we apply Lemma \ref{Lemma5adaptado}, we get, with probability one, for $n$ large enough,
	$$\hat{\delta}(C_{r}(\mathcal{X}_n)\setminus\mathcal{X}_n)\leq\sup\left\{\gamma:\exists x\mbox{ such that }\{x\}+\frac{(1-\epsilon_n^+)\gamma }{f(x)^{1/d}}A\subset S\setminus \mathcal{X}_n\right\}.$$
	Equivalently, $\Delta_n(\mathcal{X}_n)\geq(1-\epsilon_n^+)\hat{\delta}(C_{r}(\mathcal{X}_n)\setminus\mathcal{X}_n) $ and, therefore, $\mathbb{P}(\hat{V}_{n,r} > c_{n,\alpha})\leq \mathbb{P}(V_n(\mathcal{X}_n)> (1-\epsilon_n^+)c_{n,\alpha}) $ from where it follows that $\mathbb{P}(\hat{V}_{n,r}> c_{n,\alpha})$ can be majorized by,
	$$\mathbb{P}(U(\mathcal{X}_n)>-(1-\epsilon_n^+)^d\log(-\log (1-\alpha)  ) $$
	$$+ ((1-\epsilon_n^+)^d-1)(\log(n)
	+(d-1)\log(\log(n)) +\log(\beta)  )   ).$$According to Theorem \ref{aaronetal}, $U(\mathcal{X}_n)\stackrel{d}{\rightarrow} U$ when $n\rightarrow\infty$. Furthermore, notice that $U$ has a continuous distribution, so convergence in distribution implies that
	$$
	\sup_{u}\left|\mathbb{P}\left(U(\mathcal{X}_n)\leq u\right)-\mathbb{P}(U\leq u)\right|\to 0.
	$$Therefore, using that $\log(n)\epsilon_n^+$ tends to zero, we get that
	$$ \mathbb{P} (U>-\log(-\log (1-\alpha)  )+o(1))\rightarrow \alpha.$$As a consequence,
	$$\mathbb{P}(\hat{V}_{n,r} >c_{n,\alpha})\leq \mathbb{P} (U(\mathcal{X}_n)>-\log(-\log (1-\alpha)  )+o(1))\rightarrow \alpha.$$
	\item[(b)] From Lemma \ref{Lemma5adaptado} (ii), 
	$$\hat{\delta}(C_{r}(\mathcal{X}_n)\setminus\mathcal{X}_n)\geq (\lambda_0-\epsilon_n^-)\mathcal{R}(C_{r}(\mathcal{X}_n)\setminus\mathcal{X}_n),$$where $\epsilon_n^-$ tends to zero, almost surely. Under $H_1$ ($S$ is not $r-$convex, $S\subsetneq  C_{r}(S)$), we will prove that, with probability one and for $n$ large enough,
	$$\mathcal{R}(C_{r}(\mathcal{X}_n)\setminus\mathcal{X}_n)\geq \rho^{'}>0. $$In particular, we will find a closed ball of radius $\rho^{'}>0$ that, with probability one and for $n$ large enough, is inside $C_{r}(\mathcal{X}_n)\setminus\mathcal{X}_n$.\\
	Then, let be $r^{*}$ such that $r>r^*>0$ and $S\subsetneq C_{r^*}(S)\subset C_{r}(S)$. Since $S$ is under ($R$), Proposition 2.2 in Rodr\'iguez-Casal and Saavedra-Nieves (2016) ensures that $S$ is $r-$convex. However, under $H_1$, $S$ is not $r-$convex. Therefore, it is easy to guarantee the existence of $r^{*}$.\\
	According to Lemma 8.3 in Rodr\'iguez-Casal and Saavedra-Nieves (2016),
	$$\exists B_\rho(x) \mbox{ such that }B_\rho(x)\subset C_{r^*}(S)\mbox{ and }B_\rho(x)\cap S=\emptyset. $$
	It can be assumed, without loss
	of generality, that $r\leq\frac{\rho}{2}+r^*$. If this is not the case then it would be possible to replace $r^*$ by $r^{**}>r^*$ satisfying  $r^{**}<r\leq\frac{\rho}{2}+r^{**}$. For this $r^{**}$,
	$$B_\rho(x)\subset C_{r^*}(S)\subset  C_{r^{**}}(S) \mbox{ and }B_\rho(x)\cap S=\emptyset. $$
	Now, we can apply Lemma 3 in Walther (1997) in order to ensure that 
	$$
	\mathbb{P}\left(S\oplus r^* B_1[0]\subset \mathcal{X}_n\oplus rB_1[0],\mbox{ eventually}\right)=1.\vspace{.3cm}
	$$
	If $S\oplus r^* B_1[0]\subset \mathcal{X}_n\oplus rB_1[0]$ then $(S\oplus r^* B_1[0])\ominus r^*B_1[0]\subset (\mathcal{X}_n\oplus rB_1[0])\ominus r^*B_1[0]$, that is, $C_{r^*}(S)\subset  (\mathcal{X}_n\oplus rB_1[0])\ominus r^*B_1[0]$.
	This imply that
	$$C_{r^*}(S)\ominus (r-r^*)B_1[0]\subset ((\mathcal{X}_n\oplus rB_1[0])\ominus r^*B_1[0])\ominus (r-r^*)B_1[0].$$In addition,
	$$ ((\mathcal{X}_n\oplus rB_1[0])\ominus r^*B_1[0])\ominus (r-r^*)B_1[0]=(\mathcal{X}_n\oplus rB_1[0])\ominus rB_1[0]=C_{r}(\mathcal{X}_n),$$
	where we have used that, for sets $A,C$ and $D$, $(A\ominus C)\ominus D=A\ominus (C\oplus D)$. Finally, since $B_{\rho}(x)\subset C_{r^*}(S)$ and
	$\rho/2\geq (r-r^*)$, we have
	$B_{\rho/3}[x]\subset C_{r^*}(S)\ominus (\rho/2 )B_1[0]\subset C_{r^*}(S)\ominus (r-r^*)B_1[0]\subset C_{r}(\mathcal{X}_n)$. This part of the proof is concluded by taking $\rho^{'}=\rho/3$. \\
	Therefore, 
	$$\hat{\delta}(C_{r}(\mathcal{X}_n)\setminus\mathcal{X}_n)\geq (\lambda_0-\epsilon_n^-)\mathcal{R}(C_{r}(\mathcal{X}_n)\setminus\mathcal{X}_n)\geq(\lambda_0-\epsilon_n^-)\rho^{'}.$$
	Then, with probability one and for $n$ large enough,
	$$\hat{\delta}(C_{r}(\mathcal{X}_n)\setminus\mathcal{X}_n)>\frac{\rho^{'}}{2}\lambda_0.$$The proof is finished taking into account that $c_{n,\alpha}$ tends to zero.\qedhere
\end{itemize}

$ $\\
\emph{Proof of Theorem \ref{consistencia}.}\vspace{2mm}\\
Some auxiliary results are necessary. First we will prove that, with probability tending to one, $\hat{r}_0$ is at least as big as $r_0$.
\begin{proposition}\label{alpha}
	Let $f$ be a density function that fulfils condition ($f_{0,1}^L$) with compact, nonconvex and nonempty support $S$ under ($R$). Let $\hat{f}_n$ be the corresponding density estimator introduced in Definition \ref{fn} and let $K$ be the kernel function under ($\mathcal{K}_{\phi}^p$). Assume that $h_n = O(n^{-\zeta})$ for some $0 <\zeta< 1/d$. Let $r_0$ be the parameter
	defined in (\ref{maximo2}) and $\hat{r}_{0}$ defined in (\ref{r0estimador}). Let $\{\alpha_n\}\subset (0,1)$ be a sequence converging to zero. Then,
	$$\lim_{n\to\infty}\mathbb{P}(\hat{r}_0\geq r_0)=1.$$\end{proposition}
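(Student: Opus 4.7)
$ $\\
\emph{Proof of Proposition \ref{alpha}.}\vspace{2mm}\\
The plan is to reduce the event $\{\hat{r}_0\geq r_0\}$ to acceptance of the test at the single value $\gamma=r_0$, and then to extend the asymptotic level bound from Theorem \ref{test}(a) from a fixed level to the vanishing level $\alpha_n$.

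First I would observe that the supremum defining $r_0$ is actually attained: under condition $(R)$, Proposition 2.4 in Rodr\'iguez-Casal and Saavedra-Nieves (2016) guarantees that $S$ is itself $r_0$-convex. Hence the hypothesis of Theorem \ref{test}(a) holds at $r=r_0$, and in particular $C_{r_0}(\mathcal{X}_n)\subset S$ almost surely. From the definition of $\hat{r}_0$ in (\ref{r0estimador}), the event $\{\hat{V}_{n,r_0}\leq c_{n,\alpha_n}\}$ is contained in $\{\hat{r}_0\geq r_0\}$, so it suffices to establish $\mathbb{P}(\hat{V}_{n,r_0}>c_{n,\alpha_n})\to 0$.

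Next I would replay the chain of inequalities used in the proof of Theorem \ref{test}(a), now at $r=r_0$ and with the vanishing significance level $\alpha_n$. The $r_0$-convexity of $S$ combined with Lemma \ref{Lemma5adaptado} yields $\Delta_n(\mathcal{X}_n)\geq (1-\epsilon_n^+)\hat{\delta}(C_{r_0}(\mathcal{X}_n)\setminus\mathcal{X}_n)$, and therefore
$$
\mathbb{P}(\hat{V}_{n,r_0}>c_{n,\alpha_n})\leq \mathbb{P}(U(\mathcal{X}_n)>u_n),
$$
where
$$
u_n=-(1-\epsilon_n^+)^d\log(-\log(1-\alpha_n))+((1-\epsilon_n^+)^d-1)(\log n+(d-1)\log\log n+\log\beta).
$$
Under $h_n=O(n^{-\zeta})$ and $(\mathcal{K}_\phi^p)$, one has $\epsilon_n^+\log n\to 0$, so the second summand of $u_n$ is $o(1)$, while $\alpha_n\to 0$ together with $\log(\alpha_n)/n\to 0$ makes $-\log(-\log(1-\alpha_n))\sim -\log\alpha_n\to +\infty$ (the second condition keeps $c_{n,\alpha_n}$ from blowing up faster than the rate of the spacing). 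Hence $u_n\to+\infty$.

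Finally, Theorem \ref{aaronetal} gives $U(\mathcal{X}_n)\stackrel{d}{\to}U$ with $U$ Gumbel-distributed, hence with continuous CDF, so P\'olya's theorem provides uniform convergence of the CDFs and
$$
\mathbb{P}(U(\mathcal{X}_n)>u_n)\leq \mathbb{P}(U>u_n)+o(1)=1-\exp(-e^{-u_n})+o(1)\longrightarrow 0,
$$
which yields $\mathbb{P}(\hat{r}_0\geq r_0)\to 1$. The main obstacle in this plan is precisely the divergence $u_n\to+\infty$: one must verify that the Gumbel-scaling term $-\log(-\log(1-\alpha_n))$, which drives the divergence, dominates the perturbation $((1-\epsilon_n^+)^d-1)\log n$ introduced by the data-driven density estimator $\hat f_n$, and this is exactly where the joint assumptions on $\alpha_n$ and $h_n$ enter.
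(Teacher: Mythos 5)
Your proof is correct and follows essentially the same route as the paper: reduce $\{\hat{r}_0\geq r_0\}$ to acceptance of the test at $\gamma=r_0$, use $C_{r_0}(\mathcal{X}_n)\subset S$ together with Lemma \ref{Lemma5adaptado} to bound $\mathbb{P}(\hat{V}_{n,r_0}>c_{n,\alpha_n})$ by $\mathbb{P}(U(\mathcal{X}_n)>u_n)$ with the same $u_n$, and conclude via the Gumbel limit and P\'olya's theorem using $\log(n)\epsilon_n^+\to 0$ and $\alpha_n\to 0$. Your explicit remark that the supremum defining $r_0$ is attained (so that $H_0$ genuinely holds at $r=r_0$) is a point the paper leaves implicit, and the condition $\log(\alpha_n)/n\to 0$ you invoke is not actually needed for this proposition, only $\alpha_n\to 0$.
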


\begin{proof}Equivalently, we will prove that 
	$$\lim_{n\to\infty}\mathbb{P}(\hat{r}_0<r_0)=0.$$
	From the definition of $\hat{r}_0$, see (\ref{r0estimador}), it is clear that
	$$\mathbb{P}(\hat{r}_0\geq r_0)\geq \mathbb{P}(\hat{V}_{n,r_0}\leq c_{n,\alpha_n})$$where $\hat{V}_{n,r_0}=\hat{\delta}(C_{r_0}(\mathcal{X}_n)\setminus \mathcal{X}_n )^d$ and
	$c_{n,\alpha_n}= n^{-1}( -\log(-\log(1-\alpha_n) ) +\log (n)   +(d-1)\log\log(n)+\log{\beta})$. Therefore,
	$$\mathbb{P}(\hat{r}_0<r_0)\leq \mathbb{P}(\hat{V}_{n,r_0}> c_{n,\alpha_n}).$$
	Since, with probability one, $C_{r_0}(\mathcal{X}_n)\subset S$, applying Lemma \ref{Lemma5adaptado}, $\Delta_n(\mathcal{X}_n)\geq(1-\epsilon_n^+)\hat{\delta}(C_{r_0}(\mathcal{X}_n)\setminus\mathcal{X}_n) $ and, therefore, $\mathbb{P}(\hat{V}_{n,r_0} > c_{n,\alpha_n})\leq \mathbb{P}(V_n(\mathcal{X}_n)> (1-\epsilon_n^+)^d c_{n,\alpha_n}) $ from where it follows that $\mathbb{P}(\hat{V}_{n,r_0}> c_{n,\alpha_n})$ can be majorized by,
	\small{$$\mathbb{P}(U(\mathcal{X}_n)>-(1-\epsilon_n^+)^d\log(-\log (1-\alpha_n)  ) + ((1-\epsilon_n^+)^d-1)(\log(n)+(d-1)\log(\log(n)) +\log(\beta)  )   ).$$}
	According to Theorem \ref{aaronetal}, $U(\mathcal{X}_n)\stackrel{d}{\rightarrow} U$ when $n\rightarrow\infty$. Furthermore, notice that $U$ has a continuous distribution, so convergence in distribution implies that
	$$
	\sup_{u}\left|\mathbb{P}\left(U(\mathcal{X}_n)\leq u\right)-\mathbb{P}(U\leq u)\right|\to 0.
	$$
	Since $\alpha_n\to 0$ and $\log(n)\epsilon_n^+ \to 0$, we can prove $$\mathbb{P}(U> -(1-\epsilon_n^+)^d\log(-\log (1-\alpha)  ) + ((1-\epsilon_n^+)^d-1)(\log(n)+(d-1)\log(\log(n)) +\log(\beta)  )   ) \to 0.$$This ensures
	that $$\mathbb{P}(U(\mathcal{X}_n)>-(1-\epsilon_n^+)^d\log(-\log (1-\alpha)  ) + ((1-\epsilon_n^+)^d-1)(\log(n)+(d-1)\log(\log(n)) +\log(\beta)  )   ) \to 0.$$Therefore, $\mathbb{P}(\hat{r}_0\geq r_0)\to 1$.

\end{proof}

It remains to prove that $\hat{r}_0$ cannot be arbitrarily larger that $r_0$. Some auxiliary results must be proved next.

\begin{lemma}\label{lem_spacing_minimo}Let $\mathcal{X}_n$ be a random and i.i.d sample drawn according to a density $f$ that satisfies ($f_{0,1}^L$) with compact, nonconvex and nonempty support $S$ under ($R$). Let $r_0$ be the parameter defined in (\ref{maximo2}). Then, for all $r>r_0$, there exists an open ball  $B_\rho(x)$
	such that
	$B_{\rho}(x)\cap S=\emptyset$ and
	$$
	\mathbb{P}\left(B_{\rho}(x)\subset C_{r}(\mathcal{X}_n),\mbox{ eventually}\right)=1.
	$$
	
\end{lemma}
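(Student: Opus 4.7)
The strategy is to replay the argument used for part~(b) of Theorem~\ref{test}, now with the hypothesis $r>r_0$ playing the role previously played by the non-$r$-convexity of $S$. Since $S$ is nonconvex and $r>r_0$, the definition of $r_0$ as the supremum in~(\ref{maximo2}) forces $S\subsetneq C_r(S)$. I would fix an intermediate value $r^{*}\in(r_0,r)$, so that $S\subsetneq C_{r^{*}}(S)\subset C_r(S)$. Applying Lemma~8.3 of Rodr\'iguez-Casal and Saavedra-Nieves (2016) to $C_{r^{*}}(S)$ then yields an open ball $B_\rho(x)$ with $B_\rho(x)\subset C_{r^{*}}(S)$ and $B_\rho(x)\cap S=\emptyset$; a ball slightly smaller than this one will be the ball claimed by the lemma.

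A minor size adjustment is needed: I would arrange without loss of generality that $r-r^{*}\leq \rho/2$, replacing $r^{*}$ by some $r^{**}\in(r^{*},r)$ close enough to $r$ if necessary. Monotonicity of $C_{\cdot}(S)$ in its radius preserves $B_\rho(x)\subset C_{r^{**}}(S)$, and disjointness from $S$ is untouched. The probabilistic input is Walther's (1997) Lemma~3, which guarantees that $\mathbb{P}(S\oplus r^{*}B_1[0]\subset \mathcal{X}_n\oplus rB_1[0]\mbox{ eventually})=1$.

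From there, the argument is purely morphological and reproduces the chain used in the proof of Theorem~\ref{test}(b): eroding both sides of Walther's inclusion first by $r^{*}B_1[0]$ and then by $(r-r^{*})B_1[0]$, and using $(A\ominus C)\ominus D=A\ominus(C\oplus D)$, one obtains
$$C_{r^{*}}(S)\ominus (r-r^{*})B_1[0]\subset C_r(\mathcal{X}_n)$$
eventually, almost surely. Because $B_\rho(x)\subset C_{r^{*}}(S)$ and $r-r^{*}\leq \rho/2$, any $y$ with $\|y-x\|\leq \rho/3$ satisfies $B_{\rho/2}[y]\subset B_\rho(x)\subset C_{r^{*}}(S)$, so $B_{\rho/3}[x]\subset C_{r^{*}}(S)\ominus (r-r^{*})B_1[0]$. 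Hence the open ball $B_{\rho/3}(x)$ is eventually contained in $C_r(\mathcal{X}_n)$ and is disjoint from $S$; relabeling $\rho/3$ as the desired radius gives the ball required by the lemma.

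The only real obstacle is the bookkeeping around the WLOG adjustment---namely, verifying that a suitable $r^{**}\in(r_0,r)$ with $r-r^{**}\leq \rho/2$ always exists and that monotonicity in the radius indeed preserves $B_\rho(x)\subset C_{r^{**}}(S)$---together with confirming that Walther's Lemma~3 applies under the standing hypothesis~($R$). Once these points are in place, the conclusion follows directly from Walther's almost sure inclusion and the set-algebraic identities for erosion and dilation.
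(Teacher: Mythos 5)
Your proposal is correct and follows essentially the same route as the paper's own proof: an intermediate radius $r^{*}\in(r_0,r)$, Lemma 8.3 of Rodr\'iguez-Casal and Saavedra-Nieves (2016) to produce the empty ball inside $C_{r^{*}}(S)$, the WLOG shrinking of $r-r^{*}$ below half the ball's radius, Walther's Lemma 3, and the erosion identity $(A\ominus C)\ominus D=A\ominus(C\oplus D)$ to land inside $C_r(\mathcal{X}_n)$. The only cosmetic difference is that the paper keeps the radius $\epsilon/2$ at the end while you pass to $\rho/3$ (mirroring the proof of Theorem \ref{test}(b)); both are valid.
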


\begin{proof}\normalsize
	Let be $r^{*}$ such that $r>r^*>r_0$. Since $C_{r_0}(S)=S \subsetneq C_{r^*}(S)$, according to Lemma 8.3 in Rodr\'iguez-Casal and Saavedra-Nieves (2016),
	$$\exists B_\epsilon(x) \mbox{ such that }B_\epsilon(x)\subset C_{r^*}(S)\mbox{ and }B_\epsilon(x)\cap S=\emptyset. $$
	It can be assumed, without loss
	of generality, that $r\leq\frac{\epsilon}{2}+r^*$. If this is not the case then it would be possible to replace $r^*$ by $r^{**}>r^*$ satisfying  $r^{**}<r\leq\frac{\epsilon}{2}+r^{**}$. For this $r^{**}$,
	$$B_\epsilon(x)\subset C_{r^*}(S)\subset  C_{r^{**}}(S) \mbox{ and }B_\epsilon(x)\cap S=\emptyset. $$
	Now, we can apply Lemma 3 in Walther (1997) in order to ensure that\\
	$$
	\mathbb{P}\left(S\oplus r^* B\subset \mathcal{X}_n\oplus rB,\mbox{ eventually}\right)=1.\vspace{.3cm}
	$$
	If $S\oplus r^* B\subset \mathcal{X}_n\oplus rB$ then $(S\oplus r^* B)\ominus r^*B\subset (\mathcal{X}_n\oplus rB)\ominus r^*B$, that is, $C_{r^*}(S)\subset  (\mathcal{X}_n\oplus rB)\ominus r^*B$.
	This imply that
	$$C_{r^*}(S)\ominus (r-r^*)B\subset ((\mathcal{X}_n\oplus rB)\ominus r^*B)\ominus (r-r^*)B.$$In addition,
	$$ ((\mathcal{X}_n\oplus rB)\ominus r^*B)\ominus (r-r^*)B=(\mathcal{X}_n\oplus rB)\ominus rB=C_r(\mathcal{X}_n),$$
	where we have used that, for sets $A,C$ and $D$, $(A\ominus C)\ominus D=A\ominus (C\oplus D)$. Finally, since $B_{\epsilon}(x)\subset C_{r^*}(S)$ and
	$\epsilon/2\geq (r-r^*)$, we have
	$B_{\epsilon/2}(x)\subset C_{r^*}(S)\ominus (\epsilon/2 )B\subset C_{r^*}(S)\ominus (r-r^*)B\subset C_{r}(\mathcal{X}_n)$. This concludes the proof of the lemma by taking $\rho=\epsilon/2$.  \end{proof}

\begin{proposition}\label{prop_nosobreestimamos}
	Let $\mathcal{X}_n$ be a random and i.i.d sample drawn according to a density $f$ that satisfies ($f_{0,1}^L$) with compact, nonconvex and nonempty support $S$ under ($R$). Let $r_0$ be the parameter defined in (\ref{maximo2}) and $\{\alpha_n\}\subset (0,1)$ a sequence converging to zero such that $\log(\alpha_n)/n\rightarrow 0$. Then, for any $\epsilon>0$,
	$$
	\mathbb{P}\left(\hat{r}_0\leq r_0+\epsilon,\mbox{ eventually}\right)=1.
	$$
\end{proposition}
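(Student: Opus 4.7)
The plan is to fix $\epsilon>0$, set $r=r_0+\epsilon$, and show that almost surely, for $n$ large enough, the null hypothesis of $r$-convexity is rejected by the test, and indeed the test is rejected for every $\gamma\geq r$. This will force $\hat{r}_0\leq r_0+\epsilon$ by the definition (\ref{r0estimador}).

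The starting point is Lemma \ref{lem_spacing_minimo} applied to $r=r_0+\epsilon>r_0$: it produces an open ball $B_\rho(x)$ with $B_\rho(x)\cap S=\emptyset$ such that, with probability one, $B_\rho(x)\subset C_r(\mathcal{X}_n)$ eventually. Since $\mathcal{X}_n\subset S$, the ball $B_\rho(x)$ is also disjoint from $\mathcal{X}_n$, so $\mathcal{R}(C_r(\mathcal{X}_n)\setminus\mathcal{X}_n)\geq\rho$ eventually almost surely. Next I invoke the auxiliary Lemma \ref{Lemma5adaptado}(ii), exactly as in the proof of Theorem \ref{test}(b), to convert this inner radius bound into a lower bound on the plug-in spacing: $\hat{\delta}(C_r(\mathcal{X}_n)\setminus\mathcal{X}_n)\geq(\lambda_0-\epsilon_n^-)\rho$, with $\epsilon_n^-\to 0$ a.s. Raising to the $d$-th power, $\hat{V}_{n,r}\geq((\lambda_0-\epsilon_n^-)\rho)^d$, which is bounded below eventually by a strictly positive constant $c>0$.

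Then I compare with the critical value. Since $-\log(-\log(1-\alpha_n))\sim-\log(\alpha_n)$ as $\alpha_n\to 0$, and $\log(\alpha_n)/n\to 0$ by hypothesis, together with $\log(n)/n\to 0$ and $\log(\log(n))/n\to 0$, the threshold $c_{n,\alpha_n}$ tends to zero. So eventually $\hat{V}_{n,r}>c_{n,\alpha_n}$ and the test rejects $r$-convexity with probability one. To conclude that $\hat{r}_0\leq r$, I extend the rejection to every $\gamma\geq r$ by a monotonicity observation: if $\gamma_1\leq\gamma_2$ then $C_{\gamma_1}(\mathcal{X}_n)\subset C_{\gamma_2}(\mathcal{X}_n)$, so any admissible ball witnessing the spacing in $C_{\gamma_1}(\mathcal{X}_n)\setminus\mathcal{X}_n$ is also admissible in $C_{\gamma_2}(\mathcal{X}_n)\setminus\mathcal{X}_n$ (the density estimator $\hat{f}_n$ is only increased when the indicator set $C_{\gamma}(\mathcal{X}_n)$ grows, which preserves or enlarges the spacing via the $1/\hat{f}_n^{1/d}$ scaling — I will verify this factor explicitly using Definition \ref{fn}). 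Hence $\hat{V}_{n,\gamma}\geq\hat{V}_{n,r}>c_{n,\alpha_n}$ for all $\gamma\geq r$, so no such $\gamma$ is accepted by the test, and the supremum in (\ref{r0estimador}) is at most $r=r_0+\epsilon$ eventually a.s.

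The most delicate step is the monotonicity argument, because $\hat{f}_n$ appears in the denominator inside the definition of $\hat{\delta}$; one needs to check that enlarging $C_\gamma(\mathcal{X}_n)$ can only inflate $\hat{f}_n$ pointwise (hence shrink the rescaled copy of $A$) in a way that still leaves the ball $B_\rho(x)$ from Lemma \ref{lem_spacing_minimo} witnessing a spacing bounded below by a positive constant. A clean way is to work directly with a ball slightly smaller than $B_\rho(x)$ on which $\hat{f}_n$ is bounded above by a deterministic constant (controlled by $f_1$ plus the uniform bandwidth-driven error on the kernel estimate given by $h_n=O(n^{-\zeta})$), so that the rescaled set $\gamma'\hat{f}_n(x)^{-1/d}A$ fits inside it for a $\gamma'$ whose $d$-th power dominates $c_{n,\alpha_n}$. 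This turns the argument into essentially the same calculation carried out in Theorem \ref{test}(b), and the same Lemma \ref{Lemma5adaptado} supplies the required uniform control.
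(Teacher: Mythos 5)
Your proposal is correct and follows essentially the same route as the paper's own proof: Lemma \ref{lem_spacing_minimo} supplies a ball disjoint from $S$ but eventually contained in $C_{r_0+\epsilon}(\mathcal{X}_n)$, this forces $\hat{V}_{n,\gamma}$ to stay above a positive constant for all $\gamma\geq r_0+\epsilon$ by monotonicity, and the critical value $c_{n,\alpha_n}$ tends to zero under the assumption $\log(\alpha_n)/n\to 0$. Your explicit appeal to Lemma \ref{Lemma5adaptado}(ii) to bound $\hat{f}_n$ from below, and your verification of the monotonicity of $\hat{V}_{n,\gamma}$ in $\gamma$, only make rigorous two steps the paper treats somewhat tersely (the constancy of $c_\gamma$ and the word ``Similarly'').
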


\begin{proof}\normalsize
	Given $\epsilon>0$ let be $r=r_0+\epsilon$. According to Lemma \ref{lem_spacing_minimo}, there exists $x\in\mathbb{R}^d$ and $\rho>0$ such that $B_{\rho}(x)\cap S=\emptyset$
	and
	$$
	\mathbb{P}\left(B_{\rho}(x)\subset C_{r}(\mathcal{X}_n),\mbox{ eventually}\right)=1.
	$$
	Since, with probability one, $\mathcal{X}_n\subset S$ we have $B_{\rho}(x)\cap \mathcal{X}_n=\emptyset$. Then, $\{x\}+\rho B_{1}[0]\subset C_{r}(\mathcal{X}_n)\setminus \mathcal{X}_n$. Let $W$ the positive number $\hat{f}_n^{1/d}(x) w_d^{1/d}$. If $\gamma=\rho W>0$ then it is trivial to check that $$\{x\}+\frac{\gamma}{\hat{f}_n^{1/d}(x)}w_d^{-1/d}B_1[0]\subset C_{r}(\mathcal{X}_n)\setminus \mathcal{X}_n.$$
Therefore, $\hat{\delta}(C_{r}(\mathcal{X}_n)\setminus \mathcal{X}_n)\geq \gamma>0$ and, consequently, $\hat{V}_{n,r}=c_\gamma>0$.  Similarly, $\hat{V}_{n,r^{'}}\geq\hat{V}_{n,r}=c_\gamma>0$ for all $r^{'}\geq r$. On the other hand, since $-u_{\alpha_n}/\log(\alpha_n)=\log(-\log(1-\alpha_n))/\log(\alpha_n)\to 1$, we have, with probability one, 
$$\sup_{r^{'}}c_{n,\alpha_n}=c_{n,\alpha_n}\to 0.$$
Then, with probability one, there exists $n_0$ such that if $n\geq n_0$ we have $$\sup_{r^{'}}c_{n,\alpha_n}=c_{n,\alpha_n}=\frac{1}{n}(-log(-log(1-\alpha))+log(n)+(d-1)log(log(n))+log(\beta))<c_\gamma.$$ Therefore, $\hat{r}_0\leq r$. This last statement follows from
$\hat{V}_{n,r^{\prime}}>c_{n,\alpha_n}$ for all $r^{\prime}\geq r$ and the definition of $\hat{r}_0$, see (\ref{r0estimador}).
\vspace{.5cm}\end{proof}

Theorem \ref{consistencia} is a straightforward consequence of Propositions \ref{alpha} and \ref{prop_nosobreestimamos}.\qedhere\\

$ $\\
\emph{Proof of Theorem \ref{consistencia2}.}\vspace{2mm}\\Theorem 3 of Rodr\'{\i}guez-Casal (2007) ensures that, under ($R$) when $r=\lambda=\widetilde{r}$), then $\mathbb{P}(\mathcal{E}_n)\to 1$, where
	$$
	\mathcal{E}_n=\left\{d_H(S,C_{\widetilde{r}}(\mathcal{X}_n))\leq D\left(\frac{\log n}{n}\right)^{2/(d+1)}\right\},
	$$
	and $D$ is some constant. Under the hypothesis of Theorem \ref{consistencia2} this holds for any $\widetilde{r}\leq \min\{r,\lambda\}$. Fix one $\widetilde{r}\leq \min\{r,\lambda\}$ such
	that $\widetilde{r}<\nu r_0$ and define
	$\mathcal{R}_n=\{\widetilde{r}\leq r_n\leq r_0\}$. Since, by
	Theorem \ref{consistencia}, $r_n=\nu \hat{r}_0$ converges in probability to $\nu r_0$ and $\widetilde{r}<\nu r_0<r_0$, we have that $\mathbb{P}(\mathcal{R}_n)\to 1$.  If the events
	$\mathcal{E}_n$ and $\mathcal{R}_n$  hold (notice that $\mathbb{P}(\mathcal{E}_n\cap \mathcal{R}_n)\to 1$) we have $C_{\widetilde{r}}(\mathcal{X}_n)\subset C_{r_n}(\mathcal{X}_n)\subset S$ and, therefore,
	$$
	d_H(S,C_{r_n}(\mathcal{X}_n))\leq d_H(S,C_{\widetilde{r}}(\mathcal{X}_n))\leq D\left(\frac{\log n}{n}\right)^{2/(d+1)}.
	$$
	This completes the proof of the first statement of Theorem  \ref{consistencia2}. Similarly, it is possible to prove the result for the other error criteria considered in
	Theorem \ref{consistencia2}.

\section{Auxiliary results}\label{aux}

Lemma \ref{Lemma5adaptado} shows that Lemma 5 in Aaron et al. (2017) remains true if $S$ satisfies ($R$) and the density estimator $\hat{f}_n$ introduced in Definition \ref{fn} is considered. Concretely, Aaron et al. (2017) assumed that $S$ is a compact standard set. Roughly speaking, this condition prevents the support $S$ from being too spiky. Under the smoothness condition ($R$), standardness is guaranteed. See Rodr\'iguez-Casal (2007) or Cuevas and Fraiman (1997) for more details.

\begin{lemma}\label{Lemma5adaptado}Let $r>0$ and let $f$ be a density function  that satisfies ($f_{0,1}^L$) with compact and nonempty support $S$ under ($R$). Let $\hat{f}_n$ be the corresponding density estimator introduced in Definition \ref{fn} and let $K$ be the kernel function under ($\mathcal{K}_{\phi}^p$). Assume that $h_n=O(n^{-\zeta})$ with $\zeta\in (0,1/d)$. Then,
	\begin{itemize}
		\item [(i)] there exists a sequence $\epsilon_n^+$ such that $\log(n)\epsilon_n^+$ tends to zero and for all $x\in S$,
		$$\left(\frac{f(x)}{\hat{f}_n(x)}\right)^{1/d}\geq 1-\epsilon_n^+ \mbox{ }e.a.s.$$
		\item [(ii)]there exists a sequence $\epsilon_n^-$ tending to zero and a constant $\lambda_0$ such that for all $x\in C_{r}(\mathcal{X}_n)$, $(\hat{f}_n(x))^{1/d}\geq \lambda_0-\epsilon_n^-$ e.a.s.
		
	\end{itemize}
	
\end{lemma}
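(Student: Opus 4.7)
The plan is to reduce both claims to two standard almost-sure rates: a uniform rate for the classical kernel estimator $f_n$, and a Hausdorff-distance rate for $\mathcal{X}_n$ against $S$. First I would apply a Gin\'e-Guillou / Einmahl-Mason type uniform deviation bound; condition $(\mathcal{K}_\phi^p)$ is precisely tailored to make $\{K((\cdot-y)/h):y,h\}$ a bounded VC-type class via Nolan-Pollard, which together with the Lipschitz bias estimate under $(f_{0,1}^L)$ yields
$$\eta_n:=\sup_{y\in S}|f_n(y)-f(y)|=O\left(h_n\vee\sqrt{\frac{\log n}{n h_n^d}}\right)\mbox{ a.s.}$$
Under $h_n=O(n^{-\zeta})$ with $\zeta<1/d$, a direct check gives $\log(n)\eta_n\to 0$. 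Second, since $(R)$ implies $S$ is standard (Cuevas and Fraiman, 1997), a classical argument yields
$$\tau_n:=\sup_{y\in S}d(y,\mathcal{X}_n)=O\left(\left(\frac{\log n}{n}\right)^{1/d}\right)\mbox{ a.s.},$$
so $\log(n)\tau_n\to 0$ as well.

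For part (i), fix $x\in S$; if $x\notin C_{r}(\mathcal{X}_n)$ then $\hat f_n(x)=0$ and the inequality is vacuous, so assume $x\in C_{r}(\mathcal{X}_n)$. Let $k^{*}$ attain the maximum in the definition of $\hat f_n(x)$. Since $x\in \mathrm{Vor}(X_{k^{*}})$, we have $\|x-X_{k^{*}}\|=d(x,\mathcal{X}_n)\leq\tau_n$, and applying $(f_{0,1}^L)$ together with $f(x)\geq f_0$ gives
$$\hat f_n(x)=f_n(X_{k^{*}})\leq f(X_{k^{*}})+\eta_n\leq f(x)+k_f\tau_n+\eta_n\leq f(x)\left(1+\frac{k_f\tau_n+\eta_n}{f_0}\right).$$
Writing $\delta_n=(k_f\tau_n+\eta_n)/f_0$ and using the elementary inequality $(1+\delta_n)^{-1/d}\geq 1-\delta_n/d$ for small $\delta_n$, we obtain $(f(x)/\hat f_n(x))^{1/d}\geq 1-\epsilon_n^{+}$ with $\epsilon_n^{+}:=\delta_n/d$; by construction $\log(n)\epsilon_n^{+}\to 0$.

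For part (ii), fix $x\in C_{r}(\mathcal{X}_n)$ and let $k^{*}$ realize the max. Because every sample point lies in $S$, $f(X_{k^{*}})\geq f_0$, so
$$\hat f_n(x)=f_n(X_{k^{*}})\geq f(X_{k^{*}})-\eta_n\geq f_0-\eta_n,$$
and $\hat f_n(x)^{1/d}\geq f_0^{1/d}-C\eta_n$ for some $C=C(f_0,d)$; taking $\lambda_0=f_0^{1/d}$ and $\epsilon_n^{-}=C\eta_n$ closes the argument. The main obstacle is securing the first ingredient: the almost-sure uniform rate for $f_n-f$ demands VC-measurability of the kernel class, which is exactly where $(\mathcal{K}_\phi^p)$ is doing the technical work. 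Once $\eta_n$ and $\tau_n$ are in hand, the rest is routine Lipschitz bookkeeping on Voronoi cells, with the max in the definition of $\hat f_n$ being handled uniformly because $\|x-X_{k^{*}}\|=d(x,\mathcal{X}_n)$ for every index attaining it.
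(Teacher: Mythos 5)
There is a genuine gap in your part (ii), and it originates in your first ingredient. The two-sided uniform rate $\eta_n:=\sup_{y\in S}|f_n(y)-f(y)|=O\bigl(h_n\vee\sqrt{\log n/(nh_n^d)}\bigr)$ a.s.\ is false for a density supported on a compact set: writing $\mathbb{E}f_n(y)-f(y)=\int_{\{u:y+uh_n\in S\}}K(u)(f(y+uh_n)-f(y))\,du-f(y)\int_{\{u:y+uh_n\notin S\}}K(u)\,du$, the first term is $O(h_n)$ by the Lipschitz condition, but for $y$ within distance $O(h_n)$ of $\partial S$ the second term is of order $-f(y)/2$, so $\sup_{y\in S}|f_n(y)-f(y)|\geq f_0/2-o(1)$ does not vanish. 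Only the \emph{one-sided} bound $\sup_{y\in S}(f_n(y)-f(y))\leq O(h_n)+O\bigl(\sqrt{\log n/(nh_n^d)}\bigr)$ holds, because the boundary bias is downward. This is why your part (i) survives: there you only use the upward deviation $f_n(X_{k^*})\leq f(X_{k^*})+\eta_n$, and your Voronoi/Lipschitz bookkeeping via $\|x-X_{k^*}\|=d(x,\mathcal{X}_n)\leq\tau_n$ is a legitimate (and slightly more direct) variant of the paper's argument, which instead conditions on the event $\{d_H(\mathcal{X}_n,S)\leq\rho_n\}$ and bounds $\mathbb{E}\hat f_n(x)$.

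For (ii), however, your step $f_n(X_{k^*})\geq f(X_{k^*})-\eta_n\geq f_0-\eta_n$ fails exactly for sample points near $\partial S$, which is where the difficulty lies, and the constant $\lambda_0=f_0^{1/d}$ you propose is not attainable. The correct lower bound uses the part of $(\mathcal{K}_\phi^p)$ that your proof never invokes, namely that $K\geq c'_K$ on a ball $B_{r_K}[0]$: combined with the geometric regularity $(R)$ (which gives standardness of $S$), this yields $\int_S K((u-x)/h_n)\,du\geq c''_K h_n^d$ for all $x\in S$, hence $\mathbb{E}f_n(x)\geq f_0 c'_K$ uniformly on $S$ including near the boundary; subtracting the Gin\'e--Guillou stochastic term then gives $\hat f_n(x)^{1/d}\geq\lambda_0-\epsilon_n^-$ with $\lambda_0=(f_0c'_K)^{1/d}$, strictly smaller than $f_0^{1/d}$. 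Since the statement of (ii) only asks for \emph{some} positive constant $\lambda_0$, the conclusion is recoverable, but not by the route you describe: you must replace the false two-sided uniform consistency of $f_n$ on $S$ by the pair of one-sided estimates above.
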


\begin{proof}Next, some preliminary results established in Aaron et al. (2017) (see proof of Lemma 5) are detailed.\\
	First, taking $\rho_n=\left(\frac{4f_1\log(n)}{f_0w_dn}\right)^{1/d}$, it can be proved that
	\begin{equation}\label{38}
	\mathbb{P}(d_H(\mathcal{X}_n,S)\geq \rho_n)\leq C_Sn^{-2},\mbox{ for }n\mbox{ large enough.}
	\end{equation} 
	Under ($\mathcal{K}_{\phi}^p$), $S$ verifies ($R$) and $K$ is bounded from below on a neighbourhood of the origin, there exist $c^{''}_K$ and $r_K>0$
	such that 
	\begin{equation}\label{39}
	\int_{S}K\left(\frac{u-x}{r}\right)du\geq c^{''}_Kr^d \mbox{ for all }x\in S \mbox{ and }r\leq r^{'}_K.
	\end{equation}
	Furthermore, for all $x\in S$,
	$$\mathbb{E}(f_n(x))=\int_{\{u:x+uh_n\in S\}}K(u)f(x+uh_n)du.$$
Since $f$ is Lipschitz and $\int_{\mathbb{R}^d}K(u)du=1$ it is verified that, for all $x\in S$,
\begin{equation}\label{40}
\mathbb{E}f_n(x)\leq \int_{\{u:x+uh_n\in S\}}K(u)(f(x)+k_f\|u\|h_n)du\leq f(x)+k_fh_nc_k
\end{equation}where $c_k>0$ y $k_f$ is established in Condition B.\\
From (\ref{39}) and the condition $f(x)>f_0$ for all $x\in S$, it follows that
\begin{equation}\label{41}
\mathbb{E}f_n(x)\geq f_0 c^{'}_K \mbox{ for all }x\in S.
\end{equation}

First, we will prove (i). Using triangular inequality, we can ensure that
\begin{equation}\label{42}
\max_{x\in S}(\hat{f}_n(x)-f(x))\leq \sup_{x\in S}|\hat{f}_n(x)-\mathbb{E}\hat{f}_n(x)|+\sup_{x\in S}(\mathbb{E}\hat{f}_n(x)-f(x)).
\end{equation}
As for the first term on the right hand side of this inequality, it is necessary to take into account that $K$ verifies ($\mathcal{K}_{\phi}^p$) and $h_n=O(n^{-\zeta})$ with $\zeta\in (0,1/d)$. Then, Theorem 2.3 in Gin\'e and Gillou (2002) guarantees that, there exists a constant $C_1$ such that, with probability one, for $n$ large enough, 
$$\sqrt{\frac{nh_n^d}{-log(h_n)}}\sup_{x\in\mathbb{R}^d}|f_n(x)-\mathbb{E}f_n(x)|\leq C_1.$$Therefore,
$$\sqrt{\frac{nh_n^d}{-log(h_n)}}\sup_{x\in\mathcal{X}_n}|f_n(x)-\mathbb{E}f_n(x)|\leq C_1.$$As a consequence,
\begin{equation}\label{43}
\sqrt{\frac{nh_n^d}{-log(h_n)}}\sup_{x\in S}|\hat{f}_n(x)-\mathbb{E}\hat{f}_n(x)|\leq C_1.
\end{equation}Next, the second term on the right hand side of inequality (\ref{42}) will be bounded. For all $x\in S$,
\begin{equation*}
\mathbb{E}(\hat{f}_n(x))=\mathbb{E}(\hat{f}_n(x)|d_H(\mathcal{X}_n,S)\leq\rho_n)\mathbb{P}(d_H(\mathcal{X}_n,S)\leq\rho_n)\end{equation*}
\begin{equation}\label{44}
+ \mathbb{E}(\hat{f}_n(x)|d_H(\mathcal{X}_n,S)>\rho_n)\mathbb{P}(d_H(\mathcal{X}_n,S)>\rho_n).
\end{equation}
Since $\{(x,y)\in S^2,\mbox{ }\|x-y\|\leq h_n\}$ is compact, the Lebesgue dominate convergence theorem entails that there exists $y_0\in S$ such that $\|x-y_0|\leq \rho_n$, and a sequence $y_k$ with $y_k$ tending to $y_0$, $\|y_k-y_0\|\leq \rho_n$, such that for $n$ large enough, with probability one,
$$\mathbb{E}(\hat{f}_n(x)|d_H(\mathcal{X}_n,S)\leq\rho_n)\leq \sup_{x\in S}\mathbb{E}\left(\limsup_{y\in S:\|x-y\|\leq\rho_n}f_n(y)\right)$$
$$=\sup_{x\in S}\mathbb{E}\left(\lim_{y_k\rightarrow y_0} f_n(y_k)  \right)=\sup_{x\in S}\lim_{y_k\rightarrow y_0}\mathbb{E}\left( f_n(y_k)  \right)\leq \sup_{x\in S} \sup_{y\in S:\|x-y\|\leq\rho_n}\mathbb{E}(f_n(y)).$$
Next, equation (\ref{40}) and Lipschitz continuity of $f$ allow to prove that
\begin{equation}\label{45}
\mathbb{E}(\hat{f}_n(x)|d_H(\mathcal{X}_n,S)\leq\rho_n)\leq \max_{y\in S:\|x-y\|\leq\rho_n}\{f(y)+k_fh_nc_K\}\leq f(x)+k_f\rho_n+k_f h_n c_K.
\end{equation}
With the same type of argument, we can ensure that
\begin{equation}\label{46}
\mathbb{E}(\hat{f}_n(x)|d_H(\mathcal{X}_n,S)\geq\rho_n)\leq \sup_{y\in S} \mathbb{E} f_n(y)\leq f_1+k_fh_nc_K.
\end{equation}From equations (\ref{44}), (\ref{45}), (\ref{46}) and (\ref{38}), we get
\begin{equation}\label{47}
\sup_{x\in S}\mathbb{E}(\hat{f}_n(x)-f(x)) \leq k_f\rho_n+k_fh_nc_K+ (f_1+k_fh_nc_K)C_Sn^{-2}.
\end{equation}

Taking $\epsilon_n=k_f\rho_n+k_fh_nc_K+(f_1+k_fh_nc_K)C_Sn^{-2}+C_1\left(\frac{n h_n^d}{-log(h_n)}\right)^{1/2}$ such that $log(n)\epsilon_n$ tends to zero. From equations (\ref{42}), (\ref{43}), (\ref{47}), we obtain that, with probability one, for $n$ large enough,
$$\max_{x\in S}(\hat{f}_n(x)-f(x))\leq\epsilon_n.$$
Then, for all $x\in S$, $\hat{f}_n(x)-f(x)\leq f(x)\epsilon_n/f_0$, and thus, 
$$\frac{\hat{f}_n(x)}{f(x)}\leq 1+\frac{\epsilon_n}{f_0},$$or equivalently,
$$\left(\frac{f(x)}{\hat{f}_n(x)}\right)\geq \left(1+\frac{\epsilon_n}{f_0}\right)^{-1/d}.$$
Finally, if $\epsilon_n^+=(1-(1+\epsilon_n/f_0)^{-1/d})\sim \epsilon_n/(df_0)$ then $\epsilon_n^+log(n)$ tends to zero. Therefore, 
$$\max_{x\in S}\left(\frac{f(x)}{\hat{f}_n(x)}\right)^{1/d}\geq 1-\epsilon_n^+, \mbox{ eventually almost surely}.$$This concludes the proof of (i).\\

In order to prove (ii), observe that
$$\min_{x\in\mathbb{R}^d}\hat{f}_n(x)\geq \min_{x\in \mathbb{R}^d}\mathbb{E}\hat{f}_n(x)-\max_{x\in \mathbb{R}^d}|\mathbb{E}\hat{f}_n(x)-\hat{f}_n(x)|.$$
Since we have already proved that $ \max_{x\in \mathbb{R}^d}|\mathbb{E}\hat{f}_n(x)-\hat{f}_n(x)|$ tends to zero almost surely, it only remains to check that $ \min_{x\in \mathbb{R}^d}\mathbb{E}\hat{f}_n(x)$ is bounded from below by a positive constant. From $  \min_{x\in \mathbb{R}^d}\mathbb{E}\hat{f}_n(x)= \min_{x\in \mathcal{X}_n}\mathbb{E}f_n(x)$ and (\ref{41}), we get
$$\min_{x\in\mathbb{R}^d}\hat{f}_n(x)\geq \min_{x\in S}\mathbb{E}f_n(x)\geq f_0c^{'}_k.$$

\end{proof}

$\vspace{.1cm} $\\
\emph{Acknowledgements.} The authors are grateful to Ignacio Munilla Rumbao for drawing his attention to the extent of occurrence estimation
problem and to Rosa M. Crujeiras for her useful and enriching comments. This work has been supported by Projects MTM2016-76969P and MTM2017-089422-P from the Ministry of Economy and Competitiveness and ERDF.


\begin{thebibliography}{99}
	\bibitem{0}Aaron, C., Cholaquidis, A., Fraiman, R.: On the maximal multivariate spacing extension and convexity tests. ArXiv preprint:1411.2482 (2014)
	\bibitem{000}Arias-Castro, E., Pateiro-L\'opez, B., Rodr\'iguez-Casal, A.: Minimax Estimation of the Volume of a Set Under the Rolling Ball Condition. JASA, 1-12 (2018)
	\bibitem{1}Ba\'illo, A., Chac\'on, J. E.: A survey and a new selection criterion for statistical home range estimation. arXiv preprint arXiv:1804.05129. (2018)
	\bibitem{11}Ba\'illo, A., Cuevas, A.: On the estimation of a star-shaped set. Adv. in Appl. Probab., 33, 717--726 (2001)
 
	\bibitem{4}Berrendero, J. R., Cuevas, A., Pateiro-L\'opez, B.: A multivariate uniformity
	test for the case of unknown support. Stat. Comput., 22, 259--271 (2012)
	\bibitem{5}Chevalier, J.: Estimation du support et du contour du support d'une loi de probabilit\'e. Ann. Inst. Henri Poincar\'e Probab. Stat., 12, 339--364 (1976)
	\bibitem{66}Cuevas, A., Fraiman, R.: Set estimation. New perspectives in stochastic geometry, 374--397 (2010)
	\bibitem{7} Cuevas, A., Fraiman, R., Pateiro-L\'opez, B.: On statistical properties
	of sets fulfilling rolling-type conditions. Adv. in Appl. Probab., 44, 311--329 (2012)
	\bibitem{8}Cuevas, A., Rodr\'iguez-Casal, A.: On boundary estimation. Adv. in Appl. Probab., 36, 340--354 (2004)
	\bibitem{90}De Haan, L., Resnick, S.: Estimating the home range. Journal of Applied Probability, 31(3), 700--720 (1994)
	
	\bibitem{900}Deheuvels, P.: Strong Bounds for Multidimensional Spacings. Probab. Theory
	Related Fields, 64 (4), 411--424 (1983)
	
	\bibitem{9}Devroye, L., Wise, G. L.: Detection of abnormal behavior via nonparametric
	estimation of the support. SIAM J. Appl. Math., 38, 480--488 (1980)
	\bibitem{10} D\"umbgen, L., Walther, G.: Rates of convergence for random approximations
	of convex sets. Adv. in Appl. Probab., 28, 384--393 (1996)
	\bibitem{11}Edelsbrunner, H.: A short course in computational geometry and topology. Berlin, Germany, Springer (2014)
	\bibitem{1200}Gaston, K.J.: Rarity. Chapman and Hall, London (1993)
	\bibitem{1200}Gaston, K.J.: The Structure and Dynamics of Geographic Ranges.
	Oxford University Press, Oxford, New York (2003)
	\bibitem{120} GBIF.org (27th May 2019) GBIF Occurrence Download https://doi.org/10.15468/dl.jtoo0d.
	\bibitem{110}Genovese, C. R., Perone-Pacifico, M., Verdinelli, I., Wasserman, L.: The geometry of nonparametric filament estimation. J. Am. Statist. Assoc., 107, 788--799 (2012)
	\bibitem{130}Gin\'e, E., Guillou, A.: Rates of strong uniform consistency for multivariate
	kernel density estimators. Annales de l'Institut Henri Poincare (B). Probability
	and Statistics, 38(6), 907--921 (2002)
	\bibitem{13_0}IUCN. IUCN Red List Categories and Criteria: Version 3.1. Second edition. Gland, Switzerland and Cambridge, UK: IUCN. iv + 32pp (2012)
	\bibitem{13}Janson, S.: Maximal spacings in several dimensions. Ann. Probab., 15, 274--280 (1987)
	\bibitem{15}Joppa, L. N., Butchart, S. H., Hoffmann, M., Bachman, S. P., Akçakaya, H. R., Moat, J. F., Hughes, A.: Impact of alternative metrics on estimates of extent of occurrence for extinction risk assessment. Conservation Biology, 30(2), 362--370 (2016)
	\bibitem{16}Mandal, D. P., Murthy, C.A.: Selection of alpha for alpha-hull in $\mathbb{R}^2$. Pattern Recogn., 30, 1759--1767 (1997)
	\bibitem{17}Pateiro-L\'opez, B., Rodr\'iguez-Casal, A.: Generalizing the convex hull
	of a sample: the R package alphahull. J. Stat. Softw., 34, 1--28 (2010)
	point cloud in the plane, TEST, 22, 19--45 (2013)
	\bibitem{19}Reitzner, M.: Random polytopes and the Efron--Stein jackknife inequality. Ann. Probab., 31, 2136--2166 (2003)
 
	\bibitem{21}Rodr\'iguez-Casal, A.: Set estimation under convexity type assumptions. Ann. Inst. Henri Poincar\'e Probab. Stat., 43, 763--774 (2007)
	\bibitem{222}Rodr\'iguez-Casal, A., Saavedra-Nieves, P.: A fully data-driven method for estimating the shape of a point cloud. ESAIM: Probability and Statistics, 20, 332-348  (2016)
	\bibitem{2222}Rondinini, C., Wilson, K. A., Boitani, L., Grantham, H., Possingham, H. P.: Tradeoffs of different types of species occurrence data for use in systematic conservation planning. Ecology letters, 9, 1136--1145 (2006)
	
	
	\bibitem{22}Schneider, R.: Random approximation of convex sets. J. Microsc., 151, 211--227 (1988)
	\bibitem{23}Schneider, R.: Convex Bodies: the Brunn-Minkowski Theory. Cambridge
	University Press (1993)
	\bibitem{24}Serra, J.: Image Analysis and Mathematical Morphology. Academic
	Press, London (1982)
	\bibitem{26}Walther, G.: Granulometric smoothing. Ann. Stat., 25, 2273--2299 (1997)
	\bibitem{27}Walther, G.: On a generalization of blaschke's rolling theorem and the
	smoothing of surfaces. Math. Methods Appl. Sci., 22, 301--316 (1999)
\end{thebibliography}
\end{document}